\newcommand{\+}{\nobreakdash-}
\renewcommand{\:}{\colon}
\newcommand{\rarrow}{\longrightarrow}
\newcommand{\ot}{\otimes}
\newcommand{\ocn}{\odot}
\DeclareMathOperator{\Hom}{Hom}
\DeclareMathOperator{\Ext}{Ext}
\DeclareMathOperator{\Cohom}{Cohom}
\newcommand{\modl}{{\operatorname{\mathsf{--mod}}}}
\newcommand{\Modl}{{\operatorname{\mathsf{--Mod}}}}
\newcommand{\Contra}{{\operatorname{\mathsf{--Contra}}}}
\newcommand{\Comodl}{{\operatorname{\mathsf{--Comod}}}}
\newcommand{\Comodr}{{\operatorname{\mathsf{Comod--}}}}
\newcommand{\comodl}{{\operatorname{\mathsf{--comod}}}}
\newcommand{\Vect}{{\operatorname{\mathsf{--Vect}}}}
\newcommand{\inj}{{\mathsf{inj}}}
\newcommand{\proj}{{\mathsf{proj}}}
\newcommand{\rop}{{\mathrm{op}}}
\newcommand{\sop}{{\mathsf{op}}}
\newcommand{\lrarrow}{\mskip.5\thinmuskip\relbar\joinrel\relbar
   \joinrel\rightarrow\mskip.5\thinmuskip\relax}
\newcommand{\llarrow}{\mskip.5\thinmuskip\leftarrow\joinrel\relbar
   \joinrel\relbar\mskip.5\thinmuskip}
\newcommand{\bu}{{\text{\smaller\smaller$\scriptstyle\bullet$}}}
\newcommand{\oc}{\mathbin{\text{\smaller$\square$}}}
\newcommand{\R}{\widehat R}
\newcommand{\fP}{\mathfrak P}
\newcommand{\fQ}{\mathfrak Q}
\newcommand{\cM}{\mathcal M}
\newcommand{\cN}{\mathcal N}
\newcommand{\C}{\mathcal C}
\newcommand{\J}{\mathcal J}
\newcommand{\add}{\mathsf{add}}
\newcommand{\Sym}{\mathcal S\mathit{ym}}
\newcommand{\Ten}{\mathcal T\!\mathit{en}}
\newcommand{\sA}{\mathsf A}
\newcommand{\sB}{\mathsf B}
\newcommand{\sK}{\mathsf K}
\newcommand{\boZ}{\mathbb Z}
\newcommand{\boQ}{\mathbb Q}
\newcommand{\Section}[1]{\bigskip\section{#1}\medskip}
\theoremstyle{plain}
\newtheorem{thm}{Theorem}[section]
\newtheorem{prop}[thm]{Proposition}
\newtheorem{lem}[thm]{Lemma}
\newtheorem{cor}[thm]{Corollary}
\newtheorem{conc}[thm]{Conclusion}
\theoremstyle{definition}
\newtheorem{rem}[thm]{Remark}
\newtheorem{ex}[thm]{Example}
\begin{document}

\title{A bounded below, noncontractible, \\
acyclic complex of projective modules}

\author{Leonid Positselski}

\address{Institute of Mathematics, Czech Academy of Sciences \\
\v Zitn\'a~25, 115~67 Prague~1 \\ Czech Republic} 

\email{positselski@math.cas.cz}

\begin{abstract}
 We construct examples of bounded below, noncontractible, acyclic
complexes of finitely generated projective modules over some rings $S$,
as well as bounded above, noncontractible, acyclic complexes of
injective modules.
 The rings $S$ are certain rings of infinite matrices with entries in
the rings of commutative polynomials or formal power series in
infinitely many variables.
 In the world of comodules or contramodules over coalgebras over
fields, similar examples exist over the cocommutative symmetric
coalgebra of an infinite-dimensional vector space.
 A simpler, universal example of a bounded below, noncontractible,
acyclic complex of free modules with one generator, communicated to
the author by Canonaco, is included at the end of the paper.
\end{abstract}

\maketitle

\tableofcontents

\section*{Introduction}
\medskip

 Bounded above acyclic complexes of projective objects are contractible.
 So are bounded below acyclic complexes of injective objects.
 On the other hand, there is an easy, thematic example of a doubly
unbounded, acyclic, noncontractible complex of finitely generated
projective-injective modules over the algebra of dual numbers
$R=k[\epsilon]/(\epsilon^2)$ (over any field~$k$):
\begin{equation} \label{unbounded-over-dual-numbers}
 \dotsb\lrarrow R\overset{\epsilon*}\lrarrow R\overset{\epsilon*}
 \lrarrow R\lrarrow\dotsb
\end{equation}
 We refer to~\cite[Prologue]{Prel}, \cite[Sections~7.4\+-7.5]{Pksurv}
and the references therein for a discussion of the role of
the complex~\eqref{unbounded-over-dual-numbers} in the context of
derived Koszul duality and derived categories of the second kind.

 Do there exist bounded below, noncontractible, acyclic complexes of
projective modules; and if so, under what rings?
 Dual-analogously, are there any bounded above, noncontractible, acyclic
complexes of injective modules?
 These questions were posed, in the context of potential applications
to the Finitistic Dimension Conjecture, in the recent preprint of
Shaul~\cite{Sha}.
 According to~\cite[Theorem~5.1]{Sha}, nonexistence of such complexes of
projective/injective modules over a two-sided Noetherian ring $S$ with
a dualizing complex would imply finiteness of the finitistic dimensions
of~$S$.

 The aim of the present paper is to show that, over certain rather big
rings $S$, such complexes do exist.
 The examples of rings $S$ which we obtain are certainly noncommutative
and non-Noetherian.
 The more explicit ones among them are rings of column-finite or
row/column-zero-convergent infinite matrices with entries in the rings
of commutative polynomials or formal power series in infinitely
many variables.

 On the other hand, in the world of coalgebras over fields, we
demonstrate examples of bounded above, acyclic, noncontractible
complexes of injective comodules and bounded below, acyclic,
noncontractible complexes of projective contramodules over certain
\emph{cocommutative} coalgebras dual to algebras of formal power series
in infinitely many variables.
 These examples go back to~\cite[Section~0.2.7]{Psemi}, where they were
very briefly discussed in the context of semi-infinite homological
algebra and derived comodule-contramodule correspondence.

 Almost all the examples presented in this paper are based on one idea,
namely, that of the dual Koszul complex of the ring of polynomials in
infinitely many variables.
 A straightforward realization of this idea is possible in the worlds
of comodules and contramodules, but we need an additional trick with
a passage to infinite matrices in order to produce examples
of complexes of projective/injective modules.
 The only exception is the (much simpler) \emph{universal} example,
communicated to the author by A.~Canonaco.
 We reproduce it at the end of the paper in
Example~\ref{universal-example}.

 The approach to the Finitistic Dimension Conjecture developed
in~\cite{Sha0,Sha} goes back to Rickard's paper~\cite{Rick}, where it
was shown that if the injective modules over a finite-dimensional
algebra generate its unbounded derived category as a triangulated
category with coproducts, then the finitistic dimension is finite.
 A counterexample in~\cite[Theorem~3.5]{Rick} shows that for
the ring of commutative polynomials in infinitely many variables,
the generation property fails.
 Our examples in this paper follow in the footsteps
of~\cite[Section~0.2.7]{Psemi} and~\cite[Theorem~3.5]{Rick}.
 We also provide some details of the claims
in~\cite[Section~0.2.7]{Psemi} which were skipped in
the book~\cite{Psemi}.

\subsection*{Acknowledgement}
 This paper was inspired by Liran Shaul's talk at the Algebra seminar
in Prague, organized by Jan Trlifaj.
 I~want to thank both the speaker and the organizer of the seminar.
 I~also wish to thank Alberto Canonaco for communicating his example
to me and giving a kind permission to reproduce it here
(see Example~\ref{universal-example}).
 The author is supported by the GA\v CR project 23-05148S and
the Czech Academy of Sciences (RVO~67985840).

\Section{Projective, Flat, and Injective Bounded Acyclicity
Problems}

 The general convention in this paper is that complexes are presumed
to be cohomologically graded, so the differential raises the degree.
 A complex $C^\bu=(C^n,\>d_n\:C^n\to C^{n+1})$ is called
\emph{bounded above} if $C^n=0$ for $n\gg0$, and $C^\bu$ is
\emph{bounded below} if $C^n=0$ for $n\ll0$.
 In this notation, it is a standard fact that every bounded above
acyclic complex of projective modules/objects (in an abelian or
exact category) is contractible, and every bounded below acyclic
complex of injective modules/objects is contractible.
 When we occasionally consider homologically graded complexes, we
use the notation with lower indices,
$P_\bu=(P_n,\>d_n\:P_n\to P_{n-1})$.

 Let $S$ be an associative ring.
 The two ``wrong-sided bounded projective/injective acyclicity
problems'' posed in~\cite[Theorem~5.1(4\+-5)]{Sha} are:
\begin{itemize}
\item Is every bounded above acyclic complex of
injective $S$\+modules contractible?
\item Is every bounded below acyclic complex of
projective $S$\+modules contractible?
\end{itemize}

 In addition to the above two, we would like to ask a similar
question about flat $S$\+modules.
 Here one has to be careful: even a two-sided bounded acyclic complex
of flat modules need not be contractible.
 However, such a complex is always \emph{pure acyclic}, or in other
words, has flat modules of cocycles.
 Thus we ask:
\begin{itemize}
\item Is every bounded below acyclic complex of
flat $S$\+modules pure acyclic?
\end{itemize} 

 Given a ring $S$ and a left $S$\+module $M$, the \emph{character
module} $M^+=\Hom_\boZ(M,\boQ/\boZ)$ is a right $S$\+module.
 The following lemma is well-known.

\begin{lem} \label{flat-injective-character}
 A left $S$\+module $F$ is flat if and only if the right $S$\+module
$F^+$ is injective.  \qed
\end{lem}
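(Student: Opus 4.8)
The plan is to reduce the lemma to the tensor--hom adjunction together with the standard facts that $\boQ/\boZ$ is an injective object of the category of abelian groups and that it is a cogenerator there. First I would record, for every right $S$\+module $N$, the natural isomorphism of abelian groups
\[
 \Hom_\boZ(N\ot_S F,\,\boQ/\boZ)\;\cong\;\Hom_S(N,\,F^+),
\]
taking a homomorphism $g$ to the map $n\mapsto(x\mapsto g(n\ot x))$. The verification has two routine parts: that the image of $g$ is indeed a homomorphism of right $S$\+modules $N\to F^+$, and that the assignment is bijective and natural in~$N$. This isomorphism turns questions about the functor $N\mapsto N\ot_S F$ into questions about the functor $N\mapsto\Hom_S(N,F^+)$.

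For the ``only if'' implication, assume $F$ is flat and take a short exact sequence $0\to N'\to N\to N''\to 0$ of right $S$\+modules. Flatness of $F$ makes the sequence $0\to N'\ot_S F\to N\ot_S F\to N''\ot_S F\to 0$ exact; applying the exact functor $\Hom_\boZ(-,\boQ/\boZ)$ (exact because $\boQ/\boZ$ is an injective $\boZ$\+module) and invoking the isomorphism above, one obtains an exact sequence $0\to\Hom_S(N'',F^+)\to\Hom_S(N,F^+)\to\Hom_S(N',F^+)\to 0$. Thus $\Hom_S(-,F^+)$ is exact on right $S$\+modules, which is to say that $F^+$ is an injective right $S$\+module.

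For the ``if'' implication, assume $F^+$ is injective. To see that $F$ is flat it suffices to check that for every monomorphism $N'\hookrightarrow N$ of right $S$\+modules the map $N'\ot_S F\to N\ot_S F$ is a monomorphism of abelian groups. Since $\boQ/\boZ$ is an injective cogenerator, a morphism of abelian groups is injective once the morphism obtained from it by applying $\Hom_\boZ(-,\boQ/\boZ)$ is surjective; so it is enough to show that $\Hom_\boZ(N\ot_S F,\boQ/\boZ)\to\Hom_\boZ(N'\ot_S F,\boQ/\boZ)$ is surjective. By the natural isomorphism this map is identified with $\Hom_S(N,F^+)\to\Hom_S(N',F^+)$, and the latter is surjective precisely because $F^+$ is injective and $N'\to N$ is a monomorphism.

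I do not expect a genuine obstacle here, as this is the classical theorem of Lambek; the only points that need a little care are checking that the adjunction isomorphism is one of right $S$\+modules and is natural in~$N$, and correctly deploying the two complementary properties of $\boQ/\boZ$ --- its injectivity, which makes $\Hom_\boZ(-,\boQ/\boZ)$ exact and drives the ``only if'' direction, and the cogenerator property, which makes that functor reflect exactness and drives the ``if'' direction.
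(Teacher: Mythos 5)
Your proof is correct and is precisely the classical Lambek argument that the paper itself invokes without proof by labelling the lemma as well-known: the tensor--hom adjunction $\Hom_\boZ(N\ot_S F,\boQ/\boZ)\cong\Hom_S(N,F^+)$ combined with the injectivity of $\boQ/\boZ$ for one direction and its cogenerator property for the other. There is nothing to add; the two points you flag as needing care (naturality of the adjunction and the correct deployment of the two properties of $\boQ/\boZ$) are indeed the only substantive checks, and you handle both correctly.
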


 The next proposition explains the connection between
the injective, flat, and projective wrong-sided bounded acyclicity
questions, and shows that presenting a counterexample to
the ``projective'' question is enough.

\begin{prop} \label{projective-is-enough-to-refute}
 Given a ring $S$, consider the following three properties:
\begin{enumerate}
\item Every bounded above acyclic complex of injective right
$S$\+modules is contractible.
\item Every bounded below acyclic complex of flat left
$S$\+modules is pure acyclic.
\item Every bounded below acyclic complex of projective left
$S$\+modules is contractible.
\end{enumerate}
 Then the implications
\textup{(1)\,$\Longrightarrow$\,(2)\,$\Longrightarrow$\,(3)} hold.
\end{prop}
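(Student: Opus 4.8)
The plan is to prove the two implications by passing through character modules, using Lemma~\ref{flat-injective-character} as the bridge between flatness and injectivity and using the standard characterization of pure acyclicity.

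For (1)\,$\Longrightarrow$\,(2): suppose $F^\bu$ is a bounded below acyclic complex of flat left $S$\+modules. Applying the character module functor $(-)^+=\Hom_\boZ(-,\boQ/\boZ)$, which is exact and contravariant, I obtain a bounded above acyclic complex $(F^\bu)^+$ of right $S$\+modules; by Lemma~\ref{flat-injective-character} each $(F^n)^+$ is injective. By hypothesis~(1), $(F^\bu)^+$ is contractible. Now a contractible complex remains contractible (indeed split exact) after applying any additive functor, but the subtler point is to transfer this back to $F^\bu$ itself. The cleanest route is: a bounded below acyclic complex of flat modules is pure acyclic if and only if all of its modules of cocycles are flat; equivalently, by Lemma~\ref{flat-injective-character} again, the modules of cycles of the dual complex $(F^\bu)^+$ are injective. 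Since $(F^\bu)^+$ is contractible, its modules of cocycles are direct summands of its terms, hence direct summands of injective modules, hence injective. Chasing this back — using that $(Z^n F^\bu)^+$ is (up to the usual degree shift) a module of cocycles of $(F^\bu)^+$, because $(-)^+$ is exact and turns the short exact sequences $0\to Z^n\to F^n\to Z^{n+1}\to 0$ into short exact sequences — I conclude that each $(Z^n F^\bu)^+$ is injective, so each $Z^n F^\bu$ is flat by Lemma~\ref{flat-injective-character}, i.e.\ $F^\bu$ is pure acyclic.

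For (2)\,$\Longrightarrow$\,(3): suppose $P^\bu$ is a bounded below acyclic complex of projective left $S$\+modules. Projective modules are flat, so by~(2) the complex $P^\bu$ is pure acyclic, i.e.\ its modules of cocycles $Z^n=Z^n P^\bu$ are flat. Fix any $n$; the short exact sequence $0\to Z^n\to P^n\to Z^{n+1}\to 0$ is a pure exact sequence (being one of the short exact sequences of a pure acyclic complex), so since $P^n$ is projective — in particular flat — and the sequence is pure, it is enough to show these pure short exact sequences split. The key observation is that $Z^n$, being a flat module that is also a cocycle module of a \emph{bounded below} acyclic complex of projectives, is itself projective: descending induction using boundedness below, for $n\ll0$ we have $Z^n=0$, and the pure exact sequence $0\to Z^n\to P^n\to Z^{n+1}\to 0$ with $Z^{n+1}$ flat shows $Z^n$ is a pure submodule of the projective module $P^n$ with flat quotient; a pure submodule of a flat module with flat quotient, when the quotient is finitely related or when one argues via the bounded-below structure, splits off. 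More precisely, once all $Z^m$ for $m>n$ are known to be projective, the sequence $0\to Z^n\to P^n\to Z^{n+1}\to 0$ splits because $Z^{n+1}$ is projective, whence $Z^n$ is a direct summand of $P^n$ and therefore projective; this gives a contracting homotopy degree by degree, so $P^\bu$ is contractible.

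The main obstacle is the step in (2)\,$\Longrightarrow$\,(3) establishing that the flat cocycle modules are actually projective (equivalently, that the pure short exact sequences split): purity alone does not give splitting, so one genuinely needs to exploit boundedness below, building the splittings inductively from the bottom of the complex and propagating projectivity of $Z^{n+1}$ to splitting of $0\to Z^n\to P^n\to Z^{n+1}\to 0$ and hence to projectivity of $Z^n$. The analogous inductive bootstrapping in (1)\,$\Longrightarrow$\,(2) — deducing injectivity of every cocycle module of $(F^\bu)^+$ from contractibility — is comparatively routine, since contractibility immediately exhibits cocycles as summands; the care there is only in matching up cocycles of $(F^\bu)^+$ with characters of cocycles of $F^\bu$ via exactness of $(-)^+$.
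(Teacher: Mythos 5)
Your argument for (1)\,$\Rightarrow$\,(2) is correct and is essentially the paper's: dualize with $({-})^+$, use hypothesis~(1) to split the resulting bounded above complex of injectives, observe that the cocycles of $(F^\bu)^+$ are then injective, identify them with the characters of the cocycles of $F^\bu$ using exactness of $({-})^+$, and apply Lemma~\ref{flat-injective-character} in the reverse direction.

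The step (2)\,$\Rightarrow$\,(3), however, contains a genuine gap: your induction is oriented the wrong way. Boundedness below anchors the induction at the \emph{left} end of the complex ($Z^n=0$ for $n\le0$, hence $Z^1\cong P^0$ is projective), but your inductive step deduces projectivity of $Z^n$ from projectivity of $Z^{n+1}$, i.e.\ it propagates information from right to left, and there is no base case at $n=+\infty$ to start from. Running the induction in the direction that is actually available would require the step ``if $Z^n$ is projective and $0\to Z^n\to P^n\to Z^{n+1}\to0$ is pure exact with $P^n$ projective, then the sequence splits and $Z^{n+1}$ is projective,'' and this step is false: over $S=\boZ$, any epimorphism from a countably generated free abelian group onto $\boQ$ has free kernel and is pure (since $\boQ$ is flat), yet it does not split and its cokernel $\boQ$ is not projective. (No such local configuration extends to a full bounded below acyclic complex of projectives---but that is precisely the content of the statement being proved, so it cannot be assumed.) The paper closes this gap by invoking a genuinely nontrivial external result, Neeman's theorem \cite[Theorem~8.6\,(iii)\,$\Rightarrow$\,(i)]{Neem}, that \emph{every} pure acyclic complex of projective modules is contractible; some input of this strength is needed, and your sketch does not supply a substitute for it.
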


\begin{proof}
 (1)\,$\Longrightarrow$\,(2)
 Let $F^\bu=(0\to F^0\to F^1\to F^2\to\dotsb)$ be a bounded below
acyclic complex of flat left $S$\+modules.
 Then, by  the direct implication of
Lemma~\ref{flat-injective-character},
$F^{\bu,+}=(\dotsb\to F^{2,+}\to F^{1,+}\to F^{0,+}\to0)$ is
a bounded above acyclic complex of injective right $S$\+modules.
 A complex of injective modules is contractible if and only if its
modules of cocycles are injective.
 If this is the case for the complex $F^{\bu,+}$, then the inverse
implication of Lemma~\ref{flat-injective-character} tells that
the modules of cocycles of the complex $F^\bu$ are flat; so $F^\bu$
is a pure acyclic complex of flat modules.

 (2)\,$\Longrightarrow$\,(3)
 By Neeman's
theorem~\cite[Theorem~8.6\,(iii)\,$\Rightarrow$\,(i)]{Neem},
any pure acyclic complex of projective modules is contractible.
 (Cf.~\cite[proof of Theorem~A.7]{Sha}.)
\end{proof}

\Section{The Injective Construction of Acyclic Complex of Projectives}

 Let $k$~be a field, $(x_\alpha)_{\alpha\in A}$ be an infinite set of
variables, and $R=k[x_\alpha:\alpha\in A]$ be the commutative ring
of polynomials in the variables~$x_\alpha$ over~$k$.
 Endow the one-dimensional vector space $k$ over~$k$ with
the $R$\+module structure by the obvious rule: all the elements
$x_\alpha\in R$ act by zero in~$k$.

\begin{thm}[Rickard] \label{rickard-theorem}
 For any injective $R$\+module $J$ and all integers $n\ge0$, one has\/
$\Ext_R^n(J,k)=0$.
\end{thm}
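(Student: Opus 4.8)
The plan is to reduce $\Ext^n_R(J,k)=0$ to the vanishing of $\Tor^R_n(k,J)$, and then to compute the latter group via finite Koszul complexes and a filtered colimit. For the first reduction, take a projective resolution $P_\bu\to J$ over $R$; since $k$ is a field and $k=R/\mathfrak m$ with $\mathfrak m=(x_\alpha:\alpha\in A)$, the extension-of-scalars/restriction adjunction gives a natural, termwise (hence of complexes) isomorphism $\Hom_R(P_\bu,k)\cong\Hom_k(k\ot_R P_\bu,\,k)$; as $\Hom_k(-,k)$ is exact on $k$\+vector spaces it commutes with (co)homology, so $\Ext^n_R(J,k)\cong\Hom_k(\Tor^R_n(k,J),\,k)$. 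Thus it suffices to show $\Tor^R_n(k,J)=0$ for all $n\ge0$.

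For the computation of $\Tor$, I would write $k=\varinjlim_F R/\mathfrak m_F$ as the filtered colimit, over finite subsets $F\subseteq A$, of the quotients by the ideals $\mathfrak m_F=(x_\alpha:\alpha\in F)$. Each finite family $(x_\alpha)_{\alpha\in F}$ is a regular sequence in the polynomial ring $R$, so the Koszul complex $K_\bu(F;R)$ is a free resolution of $R/\mathfrak m_F$, and hence $\Tor^R_n(R/\mathfrak m_F,\,J)\cong H_n(F;J)$, the Koszul homology of $J$ with respect to $(x_\alpha)_{\alpha\in F}$; since $\Tor$ commutes with filtered colimits in its first argument, $\Tor^R_n(k,J)\cong\varinjlim_F H_n(F;J)$. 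The crux is then the claim that $H_n(F;J)=0$ whenever $|F|\ge n+1$: because $A$ is infinite, such $F$ are cofinal among all finite subsets of $A$, which forces the colimit to vanish. (This is the only point where infinitely many variables enter — for finitely many variables the top Koszul homology of an injective module is typically nonzero.)

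The remaining, and main, work is to prove by induction on $d\ge0$ (the case $d=0$ being the trivial statement $H_0=J'$) the following: for any polynomial ring $R'=k[(y_\beta)]$ over $k$, any injective $R'$\+module $J'$, and any distinct variables $y_1,\dots,y_d$, one has $H_n(y_1,\dots,y_d;J')=0$ for $0\le n\le d-1$, while $H_d(y_1,\dots,y_d;J')\cong\Hom_{R'}\bigl(R'/(y_1,\dots,y_d),\,J'\bigr)$, and this last module is injective over $R'/(y_1,\dots,y_d)$. Two elementary facts feed the induction: (i) multiplication by any variable $y$ is surjective on an injective $R'$\+module $J'$ — apply $\Hom_{R'}(-,J')$ to $0\to R'\to R'\to R'/(y)\to0$, the first map being multiplication by $y$, and use $\Ext^1_{R'}(R'/(y),J')=0$; and (ii) $\ker(y\mid J')=\Hom_{R'}(R'/(y),J')$ is an injective $R'/(y)$\+module, since the functor $N\mapsto\Hom_{R'}(N,J')$ on $R'/(y)$\+modules is exact. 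The inductive step uses the standard short exact sequence of complexes relating $K_\bu(y_1,\dots,y_d)$ to $K_\bu(y_1,\dots,y_{d-1})$ that comes from the two-term complex $R'\to R'$ (multiplication by $y_d$), whose connecting homomorphism is again multiplication by $y_d$; it produces, for each $n$, a short exact sequence
\[
 0\to\operatorname{coker}\bigl(y_d\mid H_n(y_1,\dots,y_{d-1};J')\bigr)\to H_n(y_1,\dots,y_d;J')\to\ker\bigl(y_d\mid H_{n-1}(y_1,\dots,y_{d-1};J')\bigr)\to0 .
\]
Feeding in the inductive hypothesis and applying (i) and (ii) over the polynomial ring $R''=R'/(y_1,\dots,y_{d-1})$, with $y_d$ one of its variables, kills the cokernel terms, gives $H_n(y_1,\dots,y_d;J')=0$ for $n\le d-1$, and identifies $H_d(y_1,\dots,y_d;J')$ with $\ker\bigl(y_d\mid\Hom_{R'}(R'/(y_1,\dots,y_{d-1}),J')\bigr)$, which is injective over $R''/(y_d)=R'/(y_1,\dots,y_d)$. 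Specializing to $R'=R$, $J'=J$ and any $F$ with $|F|\ge n+1$ then yields $H_n(F;J)=0$, completing the proof. I expect the principal subtlety to be precisely this bookkeeping in the induction: one must carry along the assertion that the \emph{top} Koszul homology of an injective module is again injective over the corresponding residue polynomial ring, since it is exactly this that allows fact (i) to clear away the lower Koszul homology groups each time a new variable is adjoined.
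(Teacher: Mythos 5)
Your proof is correct and follows exactly the route the paper indicates: the paper's own proof is a citation of Rickard's Theorem~3.5 together with the remark that the general case is handled by writing $R$ as a direct limit over finite subsets $B\subset A$ and taking the direct limit of the finite Koszul complexes, which is precisely the colimit $\varinjlim_F H_n(F;J)$ you compute. Your $\Ext$--$\Tor$ reduction and the induction showing that the Koszul homology of an injective module vanishes below the top degree (with the top homology again injective over the residue polynomial ring, so that surjectivity of multiplication by the next variable is available at each stage) correctly supply the details that the paper delegates to the reference.
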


\begin{proof}
 For a countably infinite set of variables~$x_\alpha$, this is
formulated and proved in~\cite[Theorem~3.5]{Rick}.
 The general case of a possibly uncountable index set $A$ is similar.
 One represents $A$ as the union of its finite subsets $B\subset A$,
so the ring $R$ the direct limit of the related polynomial rings
$R_B$ in finitely many variables, considers the direct limit of
finite Koszul complex indexed by the finite subsets $B\subset A$, etc.
 (Cf.\ the proof of Theorem~\ref{dual-rickard} below for some further
details.)
\end{proof}

 Let $\sA$ be an additive category and $M\in\sA$ be an object.
 Then we denote by $\add(M)$ the full subcategory in $\sA$ formed by
the direct summands of finite direct sums of copies of~$M$.
 The following lemma is a straightforward category-theoretic
generalization of a well-known module-theoretic observation going back
to Dress~\cite{Dr}.

\begin{lem} \label{dress-lemma}
 Let\/ $\sA$ be an idempotent-complete additive category and
$M\in\sA$ be an object. \par
\textup{(a)} Let $S=\Hom_\sA(M,M)^\rop$ be the opposite ring to
the endomorphism ring of the object $M\in\sA$; so the ring $S$
acts on the object $M$ on the right.
 Then the covariant functor\/ $\Hom_\sA(M,{-})\:\sA\rarrow S\Modl$
restricts to an equivalence of additive categories
$$
 \Hom_\sA(M,{-})\:\add(M)\simeq S\modl_\proj
$$
between the full subcategory\/ $\add(M)\subset\sA$ and the full
subcategory of finitely generated projective left $S$\+modules
$S\modl_\proj$ in the category of left $S$\+modules $S\Modl$. \par
\textup{(b)} Let $S=\Hom_\sA(M,M)$ be the endomorphism ring of
the object $M\in\sA$; so the ring $S$ acts on the object $M$
on the left.
 Then the contravariant functor\/ $\Hom_\sA({-},M)\:\sA^\sop\rarrow
S\Modl$ restricts to an anti-equivalence of additive categories
$$
 \Hom_\sA({-},M)\:\add(M)^\sop\simeq S\modl_\proj
$$
between the full subcategory\/ $\add(M)\subset\sA$ and the full
subcategory of finitely generated projective left $S$\+modules
$S\modl_\proj\subset S\Modl$. \qed
\end{lem}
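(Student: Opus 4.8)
The plan is to prove part~(a) directly and then deduce part~(b) from it. For part~(a), write $E=\Hom_\sA(M,M)$ and $F=\Hom_\sA(M,{-})\colon\sA\to S\Modl$, where $S=E^\rop$; recall that $\Hom_\sA(M,X)$ carries a left $S$-module structure for every $X$, with $s\in S$ acting by $h\mapsto h\circ s$ (composition taken in $E$). The starting point is the observation that the left $S$-module $F(M)=E$ is precisely the free module ${}_SS$ of rank one. Since $F$ is an additive functor and both $\sA$ and $S\Modl$ are idempotent-complete, $F$ carries finite direct sums of copies of $M$ to finitely generated free left $S$-modules, and direct summands of such to direct summands of finitely generated free modules; hence $F$ does restrict to a functor $\add(M)\to S\modl_\proj$.

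Next I would show that this restricted functor is fully faithful. For the single object $M$ this amounts to the identification $\Hom_S(F M,F M)=\Hom_S({}_SS,{}_SS)\cong S^\rop=E=\Hom_\sA(M,M)$, under which the map induced by $F$ is readily checked to be the identity of~$E$ (a morphism $g\colon M\to M$ goes to post-composition by $g$, which corresponds to $g$ itself under $\phi\mapsto\phi(1)$). By additivity the same then holds for every finite direct sum $M^{\oplus n}$, the Hom-groups between such sums decomposing on both sides into finite products of copies of $E$ compatibly with~$F$. To pass to arbitrary $X,Y\in\add(M)$, present them as images of idempotents $e_X\in\Hom_\sA(M^{\oplus m},M^{\oplus m})$ and $e_Y\in\Hom_\sA(M^{\oplus n},M^{\oplus n})$; then $\Hom_\sA(X,Y)$ is identified with $e_Y\Hom_\sA(M^{\oplus m},M^{\oplus n})e_X$, and likewise $\Hom_S(F X,F Y)$ with $F(e_Y)\Hom_S(F M^{\oplus m},F M^{\oplus n})F(e_X)$, so full faithfulness on the direct sums forces it on $X$ and~$Y$.

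It remains to prove essential surjectivity. Any $P\in S\modl_\proj$ is a direct summand of a finitely generated free module ${}_SS^{\oplus n}\cong F(M^{\oplus n})$, with splitting idempotent $\pi\in\Hom_S(F(M^{\oplus n}),F(M^{\oplus n}))$. By the full faithfulness just established, $\pi=F(\widetilde\pi)$ for a (necessarily idempotent) endomorphism $\widetilde\pi$ of $M^{\oplus n}$; since $\sA$ is idempotent-complete, $\widetilde\pi$ splits through some object $X\in\add(M)$, and applying the additive functor $F$ shows that $F(X)$, together with the images under $F$ of the maps splitting $\widetilde\pi$, realizes the splitting of $\pi=F(\widetilde\pi)$, whence $F(X)\cong P$. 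This establishes that $F\colon\add(M)\to S\modl_\proj$ is an equivalence of additive categories, proving part~(a).

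Part~(b) then follows by applying part~(a) to the opposite category $\sA^\sop$: it is again idempotent-complete; one has $\Hom_{\sA^\sop}(M,M)^\rop=(\Hom_\sA(M,M)^\rop)^\rop=\Hom_\sA(M,M)=S$; the covariant functor $\Hom_{\sA^\sop}(M,{-})\colon\sA^\sop\to S\Modl$ is precisely the contravariant functor $\Hom_\sA({-},M)$; and $\add_{\sA^\sop}(M)=\add_\sA(M)^\sop$. Thus the equivalence $\add_{\sA^\sop}(M)\simeq S\modl_\proj$ furnished by part~(a) is exactly the anti-equivalence asserted in part~(b). The whole argument is formal; the most involved step is the reduction of full faithfulness from $M$ to arbitrary objects of $\add(M)$, though even that is routine idempotent bookkeeping. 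The only genuine points of care — a nuisance rather than an obstacle — are keeping the left/right module conventions straight (the opposite ring in the definition of $S$ in part~(a) is exactly what makes $F(M)$ come out as the free \emph{left} $S$-module) and remembering to invoke idempotent-completeness on both sides whenever splittings are transported through~$F$.
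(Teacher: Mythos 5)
Your proof is correct and is precisely the standard Dress-style argument (free module of rank one, full faithfulness via idempotent bookkeeping, essential surjectivity via idempotent-completeness, and part~(b) by passing to the opposite category) that the paper itself omits, stating the lemma with a \qed as a ``straightforward generalization'' of Dress's observation. Nothing is missing; your care with the left/right conventions and the role of the opposite ring matches exactly what the statement requires.
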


 The following corollary sums up the ``injective coresolution
construction of a bounded below acyclic complex of projective modules''.

\begin{cor} \label{injective-module-coresolution-construction}
 Let $R=k[x_\alpha:\alpha\in A]$ be the ring of polynomials in
infinitely many variables over a field~$k$, and let
\begin{equation} \label{injective-module-coresolution-of-k}
 0\lrarrow k\lrarrow J^0\lrarrow J^1\lrarrow J^2\lrarrow\dotsb
\end{equation}
be an injective coresolution of the one-dimensional $R$\+module~$k$.
 Let $J$ be an injective $R$\+module such that the $R$\+module
$J^n$ is a direct summand of $J$ for all $n\ge0$.
 Let
\begin{equation} \label{acyclic-complex-of-projs-from-inj-cores}
 0\lrarrow\Hom_R(J,J^0)\lrarrow\Hom_R(J,J^1)\lrarrow
 \Hom_R(J,J^2)\lrarrow\dotsb
\end{equation}
be the complex obtained by applying the functor\/ $\Hom_R(J,{-})$ to
the truncated coresolution~\eqref{injective-module-coresolution-of-k}.
 Then~\eqref{acyclic-complex-of-projs-from-inj-cores} is
a bounded below, noncontractible, acyclic complex of finitely
generated projective left modules over the ring $S=\Hom_R(J,J)^\rop$.
\end{cor}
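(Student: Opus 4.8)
The plan is to check the four asserted properties in turn; only the last of them is not bookkeeping. Boundedness below is immediate, since \eqref{acyclic-complex-of-projs-from-inj-cores} is concentrated in cohomological degrees $\ge0$. For the projectivity assertion I would apply Dress's Lemma~\ref{dress-lemma}(a) with $\sA=R\Modl$, which is idempotent\+complete because it is abelian, and $M=J$, so that $S=\Hom_R(J,J)^\rop$ acts on the right of~$J$. Each $J^n$, being a direct summand of~$J$, lies in $\add(J)$; hence the truncated coresolution $J^\bu=(0\to J^0\to J^1\to J^2\to\dotsb)$, together with its differentials, is a complex in the additive category $\add(J)$, and the equivalence $\Hom_R(J,{-})\:\add(J)\simeq S\modl_\proj$ of Lemma~\ref{dress-lemma}(a) carries it to \eqref{acyclic-complex-of-projs-from-inj-cores}. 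Thus \eqref{acyclic-complex-of-projs-from-inj-cores} is a complex of finitely generated projective left $S$\+modules.

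For acyclicity, the key point is that the morphism $k\to J^\bu$ given by \eqref{injective-module-coresolution-of-k} is an injective resolution of the $R$\+module~$k$, so applying $\Hom_R(J,{-})$ term by term computes the Ext modules: $H^n\bigl(\Hom_R(J,J^\bu)\bigr)\cong\Ext_R^n(J,k)$ for every $n\ge0$. Since $J$ is an injective $R$\+module, Rickard's Theorem~\ref{rickard-theorem} yields $\Ext_R^n(J,k)=0$ for all $n\ge0$, whence \eqref{acyclic-complex-of-projs-from-inj-cores} is acyclic.

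Noncontractibility is the one step requiring an observation rather than a calculation, and it rests on the same equivalence. An equivalence of additive categories preserves and reflects contractibility of complexes, contractibility being the purely additive condition that the identity endomorphism of the complex be null\+homotopic; and since $\add(J)\subset R\Modl$ and $S\modl_\proj\subset S\Modl$ are full subcategories, (non)contractibility of either complex may be tested in the ambient module category. Hence \eqref{acyclic-complex-of-projs-from-inj-cores} is contractible if and only if $J^\bu$ is. But $J^\bu$ is not even acyclic: by exactness of \eqref{injective-module-coresolution-of-k} its degree\+zero cohomology is $\ker(J^0\to J^1)=\operatorname{im}(k\to J^0)\cong k\ne0$. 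A contractible complex is acyclic, so $J^\bu$, and therefore \eqref{acyclic-complex-of-projs-from-inj-cores}, is noncontractible.

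I expect the only genuine subtlety to be conceptual rather than technical: $\Hom_R(J,{-})$ is wildly non\+exact here --- it turns the non\+acyclic complex $J^\bu$ into an \emph{acyclic} one --- yet, restricted to $\add(J)$, it is an \emph{equivalence}, so it still faithfully transports the finer, homotopy\+theoretic datum of (non)contractibility; the vanishing responsible for the acyclicity of the target is precisely the content of Rickard's theorem, and beyond assembling these two facts nothing further is needed.
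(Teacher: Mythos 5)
Your proposal is correct and follows essentially the same route as the paper: acyclicity via Rickard's Theorem~\ref{rickard-theorem} applied to the injective coresolution computing $\Ext_R^\bu(J,k)$, projectivity and noncontractibility via the Dress equivalence $\Hom_R(J,{-})\:\add(J)\simeq S\modl_\proj$ of Lemma~\ref{dress-lemma}(a) together with the observation that contractibility can be tested in a full subcategory and that $J^\bu$ is nonacyclic, hence noncontractible. No gaps.
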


\begin{proof}
 The complex~\eqref{acyclic-complex-of-projs-from-inj-cores} is
acyclic by Theorem~\ref{rickard-theorem}.
 The left $S$\+module $\Hom_R(J,J^n)$ is a direct summand of
the left $S$\+module $\Hom_R(J,J)=S$ for every $n\ge0$, since
the $R$\+module $J^n$ is a direct summand of~$J$.
 So~\eqref{acyclic-complex-of-projs-from-inj-cores} is even
a complex of cyclic projective left $S$\+modules (i.~e.,
projective $S$\+modules with one generator).

 It remains to explain why the complex of
$S$\+modules~\eqref{acyclic-complex-of-projs-from-inj-cores} is not
contractible.
 For this purpose, one observes that, given a full subcategory $\sB$
in an additive category $\sA$, a complex $C^\bu$ in $\sB$ is
contractible in $\sA$ if and only if it is contractible in $\sB$.
 Indeed, any contracting homotopy for $C^\bu$ as a complex in $\sA$
would be a collection of morphisms in $\sA$ between objects from $\sB$,
which means a collection of morphisms in~$\sB$.

 By Lemma~\ref{dress-lemma}(a) (for $\sA=R\Modl$ and $M=J$), the functor
$\Hom_R(J,{-})$ is an equivalence of categories $\add(J)\simeq
S\modl_\proj$.
 The truncated coresolution~\eqref{injective-module-coresolution-of-k},
$$
 0\lrarrow J^0\lrarrow J^1\lrarrow J^2\lrarrow\dotsb
$$
is a noncontractible (since nonacyclic) complex in $R\Modl$ with
the terms belonging to $\add(J)$, so it is a noncontractible
complex in $\add(J)$.
 Applying the equivalence of additive categories $\add(J)\simeq
S\modl_\proj$, we obtain a noncontractible
complex~\eqref{acyclic-complex-of-projs-from-inj-cores}
in $S\modl_\proj$, which is consequently also noncontractible in
$S\Modl$.
 It is important for this argument that the functor
$\Hom_R(J,{-})\:\add(J)\rarrow S\Modl$ is fully faithful.
\end{proof}

\Section{Dual Rickard's Acyclicity Theorem}

 The aim of this section is to prove the following dual version of
Rickard's theorem~\cite[Theorem~3.5]{Rick}.

\begin{thm} \label{dual-rickard}
 Let $R=k[x_\alpha:\alpha\in A]$ be the ring of polynomials in
infinitely many variables over a field~$k$.
 As above, we endow the one-dimensional $k$\+vector space $k$ with
the obvious $R$\+module structure.
 Then, for any flat $R$\+module $P$ and all integers $n\ge0$,
one has\/ $\Ext^n_R(k,P)=0$.
\end{thm}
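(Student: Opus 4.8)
My plan is to reduce the computation, via the direct limit presentation of the $R$\+module $k$, to the self\+duality of the Koszul complex in finitely many variables. The sole finite-variable input is the following. If $B\subseteq A$ is finite, then $(x_\beta)_{\beta\in B}$ is a regular sequence on $R$ (because $R$ is a polynomial ring over $k[x_\beta:\beta\in B]$), so the Koszul complex $K_\bu=K_\bu\bigl((x_\beta)_{\beta\in B};R\bigr)$ is a finite free resolution of $R/(x_\beta:\beta\in B)R$ of length~$\lvert B\rvert$. Applying $\Hom_R(-,P)$ and using the self-duality $\Hom_R(K_\bu,R)\cong K_\bu$ (with the reindexing $n\mapsto\lvert B\rvert-n$), one identifies
\[
 \Ext^n_R\bigl(R/(x_\beta:\beta\in B)R,\>P\bigr)\cong
 \Tor^R_{\lvert B\rvert-n}\bigl(R/(x_\beta:\beta\in B)R,\>P\bigr),
\]
which vanishes for all $n\ne\lvert B\rvert$ because $P$ is flat.

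Next I would treat a countably infinite index set, $A=\{\alpha_1,\alpha_2,\dotsc\}$, by a telescope argument. Put $Q_m=R/(x_{\alpha_1},\dotsc,x_{\alpha_m})R$, so that $k=\varinjlim_m Q_m$ along surjections. Then $\boR\Hom_R(k,P)$ is the homotopy limit of the tower $\bigl(\boR\Hom_R(Q_m,P)\bigr)_m$, and by the previous paragraph $\boR\Hom_R(Q_m,P)$ has cohomology concentrated in degree~$m$. Hence, for each fixed~$j$, the inverse system $\bigl(\Ext^j_R(Q_m,P)\bigr)_m$ vanishes for all $m\ne j$; being eventually zero, it satisfies $\varprojlim_m\Ext^j_R(Q_m,P)=0=\varprojlim^1_m\Ext^j_R(Q_m,P)$. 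The Milnor exact sequence
\[
 0\lrarrow\varprojlim^1_m\Ext^{n-1}_R(Q_m,P)\lrarrow\Ext^n_R(k,P)
 \lrarrow\varprojlim_m\Ext^n_R(Q_m,P)\lrarrow0
\]
now yields $\Ext^n_R(k,P)=0$ for all $n\ge0$.

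Finally, for an arbitrary infinite $A$, I would reduce to the countable case. Write $k=R/(x_\alpha:\alpha\in A)R=\varinjlim_C R/(x_\alpha:\alpha\in C)R$, the direct limit over the upward directed family of all countably infinite subsets $C\subseteq A$ (whose union is~$A$). For each such~$C$ the telescope argument of the preceding paragraph applies verbatim---one enumerates~$C$ and uses the length-$m$ Koszul complexes $K_\bu\bigl((x_{\alpha_1},\dotsc,x_{\alpha_m});R\bigr)$, which are still finite free resolutions over the ambient ring~$R$---and shows that $\boR\Hom_R\bigl(R/(x_\alpha:\alpha\in C)R,\>P\bigr)$ is acyclic. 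Since a filtered direct limit of modules is exact, $k$ is the homotopy colimit of the $R/(x_\alpha:\alpha\in C)R$; the functor $\boR\Hom_R(-,P)$, which takes coproducts to products and distinguished triangles to distinguished triangles, therefore carries this to the homotopy limit of the $\boR\Hom_R\bigl(R/(x_\alpha:\alpha\in C)R,\>P\bigr)$. A homotopy limit of acyclic complexes is acyclic, so $\boR\Hom_R(k,P)$ is acyclic, i.e.\ $\Ext^n_R(k,P)=0$ for all $n\ge0$.

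The finite-variable Koszul computation, and the behaviour of $\boR\Hom_R(-,P)$ on homotopy (co)limits, are routine; the real content is the telescope/Milnor step in the countable case. The point needing care is the passage to a possibly uncountable set of variables: working directly with the direct limit of $k$ over \emph{finite} subsets of~$A$ would force one to control higher derived limits $\varprojlim^{p}$ with $p\ge2$ over an uncountable directed poset, whose vanishing is not automatic. Routing the limit through the countably generated quotients $R/(x_\alpha:\alpha\in C)R$---for which only a tower limit occurs, so the Milnor $\varprojlim^1$\+sequence suffices---is what circumvents the difficulty.
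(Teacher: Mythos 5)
Your finite-variable Koszul computation and the countable-case telescope/Milnor argument are both correct, and up to that point you are following essentially the same route as the paper. The genuine gap is in the final reduction from an arbitrary infinite $A$ to countable subsets $C$. Writing $k=\varinjlim_C k_C$ with $k_C=R/(x_\alpha:\alpha\in C)R$ and choosing an injective resolution $P\to I^\bu$, what you actually obtain is the \emph{underived} inverse limit $\Hom_R(k,I^\bu)=\varprojlim_C\Hom_R(k_C,I^\bu)$ over the uncountable directed poset of countable subsets of~$A$. Your justification that $\boR\Hom_R(-,P)$ ``carries the homotopy colimit to the homotopy limit'' rests on preservation of coproducts and distinguished triangles, which only produces \emph{sequential} homotopy (co)limits; for a non-sequential filtered diagram there is no such triangle in the triangulated category, and identifying the underived $\varprojlim$ with a derived one requires precisely the vanishing of $\varprojlim^p$, $p\ge1$, of the term-wise Hom groups over that uncountable poset. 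So the difficulty you claim to have circumvented --- higher derived limits over an uncountable directed poset, which can indeed be nonzero for $p\ge2$ --- reappears unreduced at the last step: an objectwise-acyclic diagram of complexes can perfectly well have a nonacyclic underived inverse limit.

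The gap is fillable, and the paper shows one way: it works directly over the poset of \emph{finite} subsets $B\subset A$ and controls all the higher derived limits by two observations --- (i) $\varprojlim^i_{B}\Hom_R(K^B_n(R),P)=\Ext^i_R(\varinjlim_B K^B_n(R),P)=0$ for $i\ge1$ because the direct limit of the Koszul terms is again a free module (Lemma~\ref{derived-projlim-of-homs}), and (ii) the entire derived inverse limit of an ``eventually zero'' system of cohomology groups vanishes (Lemma~\ref{derived-projlim-of-eventually-vanishing}) --- and then assembles these via the two hypercohomology spectral sequences for $\varprojlim$ (Lemma~\ref{spectral-sequence-lemma}). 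If you prefer to keep your route through countable $C$, the analogous repair is to check that $\varprojlim^i_C\Hom_R(k_C,I^n)=0$ for all $i\ge1$, which holds because the Roos complex computing these derived limits is the exact functor $\Hom_R(-,I^n)$ applied to the bar resolution of $\varinjlim_C k_C$ (filtered colimits of modules being exact), and then run the same spectral sequences. Either way, explicit control of $\varprojlim^p$ for \emph{all} $p\ge1$ over an uncountable directed poset cannot be avoided; it is the actual content of the passage from finitely or countably many variables to the general case.
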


\begin{proof}
 For every $\alpha\in A$, consider the two-term Koszul complex of
free $R$\+modules with one generator
\begin{equation} \label{two-term-koszul}
 \dotsb\lrarrow 0\lrarrow R\overset{x_\alpha*}\lrarrow R\lrarrow 0
 \lrarrow\dotsb
\end{equation}
situated in the cohomological degrees~$-1$ and~$0$.
 For every finite subset of indices $B\subset A$, denote by
$K^B_\bu(R)$ the tensor product, taken over the ring~$R$, of
the complexes~\eqref{two-term-koszul} with $\alpha\in B$.
 As a finite subset $B\subset A$ varies, the complexes $K^B_\bu(R)$
form an inductive system, indexed by the poset of all finite subsets
$B\subset A$ ordered by inclusion.

 Put $K_\bu(R)=\varinjlim_{B\subset A}K^B_\bu(R)$.
 Then $K_\bu(R)$ is a bounded above complex of free $R$\+modules.
 One has $K_n(R)=0$ for $n<0$, \ $K_0(R)=R$, and $K_n(R)$ is a free
$R$\+module with a set of generators of the cardinality equal to
the cardinality of the set $A$ for all $n>0$.
 (More invariantly, $K_n(R)$ is the free $R$\+module spanned by
the set of all subsets in $A$ of the finite cardinality~$n$).

 For any finite subset $B\subset A$, the complex $K^B_\bu(A)$ is
a finite resolution of the $R$\+module $R/\sum_{\alpha\in B}x_\alpha R$
by finitely generated free $R$\+modules.
 Passing to the direct limit, one can easily see that $K_\bu(R)$ is
a free $R$\+module resolution of the one-dimensional $R$\+module
$k=R/\sum_{\alpha\in A}x_\alpha R$.

 The following three lemmas are straightforward or standard.

\begin{lem} \label{derived-projlim-of-homs}
 Let $R$ be an associative ring, $\Xi$ be a directed poset,
and $(F_\xi)_{\xi\in\Xi}$ be an inductive system of projective
$R$\+modules whose direct limit $F=\varinjlim_{\xi\in\Xi}F_\xi$
is also a projective $R$\+module.
 Let $P$ be an arbitrary $R$\+module.
 Then the higher derived inverse limit functors vanish on
the projective system\/ $\Hom_R(F_\xi,P)_{\xi\in\Xi}$,
$$
 \varprojlim\nolimits_{\xi\in\Xi}\Hom_R(F_\xi,P)=\Hom_R(F,P)
 \quad\text{and}\quad
 \varprojlim\nolimits^i_{\xi\in\Xi}\Hom_R(F_\xi,P)=0
 \quad\text{for all\/ $i\ge1$}.
$$
\end{lem}

\begin{proof}
 Dropping the condition that the $R$\+module
$F=\varinjlim_{\xi\in\Xi}F_\xi$ is projective (but keeping
the conditions that the $R$\+modules $F_\xi$ are projective),
one would have $\varprojlim\nolimits^i_{\xi\in\Xi}\Hom_R(F_\xi,P)
=\Ext^i_R(F,P)$ for every $i\ge0$.
\end{proof}

\begin{lem} \label{derived-projlim-of-eventually-vanishing}
 Let $A$ be an infinite set and $(V_B)_{B\subset A}$ be a projective
system of abelian groups, indexed by the poset of all finite
subsets $B\subset A$ ordered by inclusion.
 Assume that there exists an integer $n\ge0$ such that $V_B=0$
whenever the cardinality of $B$ exceeds~$n$.
 Then the whole derived inverse limit functor vanishes on
the projective system $(V_B)_{B\subset A}$,
$$
 \varprojlim\nolimits^i_{B\subset A} V_B=0
 \qquad\text{for all\/ $i\ge0$}.
$$
\end{lem}

\begin{proof}
 This is a special case of the assertion that the derived functors
of inverse limit are preserved by the passage to a cofinal subsystem.
 This can be deduced from fact that the derived inverse limits
vanish on so-called weakly flabby (faiblement flasque) projective
systems~\cite[Th\'eor\`eme~1.8]{Jen}.
 A stronger result that the derived inverse limit (in an abelian
category with exact product functors) only depends on the pro-object
represented by the given projective system can be found
in~\cite[Corollary~7.3.7]{Pros}.
\end{proof}

\begin{lem} \label{spectral-sequence-lemma}
 Let\/ $\Xi$ be a directed poset and $(C^\bu_\xi)_{\xi\in\Xi}$ be
a projective system of complexes of abelian groups with $C^n_\xi=0$
for $n<0$.
 Then there are two spectral sequences ${}'\!E_r^{pq}$ and
${}''\!E_r^{pq}$, starting from the pages
\begin{alignat*}{2}
 {}'\!E_2^{pq}&=\varprojlim\nolimits^p H^q(C^\bu_\xi), 
 \qquad &&p,\,q\ge0,\\
 {}''\!E_1^{pq}&=\varprojlim\nolimits^q C^p_\xi,
 \qquad && p,\,q\ge0,
\end{alignat*}
with the differentials
${}'\!d_r^{pq}\:{}'\!E_r^{p,q}\rarrow{}'\!E_r^{p+r,q-r+1}$ and
${}''\!d_r^{pq}\:{}''\!E_r^{p,q}\rarrow{}''\!E_r^{p+r,q-r+1}$,
converging to the associated graded groups to two different filtrations
${}'\!F^pE^n$ and ${}''\!F^pE^n$ on the same graded abelian group
$E^n$, \,$n=p+q$.
\end{lem}

\begin{proof}
 These are called ``the two hypercohomology spectral sequences''
(for the derived functor of inverse limit);
cf.~\cite[Section~XVII.3]{CaE}.
 The groups $E^n$ are the cohomology groups of the complex obtained
by applying the derived functor of inverse limit to the whole
complex of projective systems~$(C^\bu_\xi)$.
\end{proof}

 Now we can finish the proof of the theorem.
 By the definition, we have $\Ext^n_R(k,P)=H^n\Hom_R(K_\bu(R),P)$.
 The complex $\Hom_R(K_\bu(R),P)$ is the inverse limit
$$
 \Hom_R(K_\bu(R),P)=\varprojlim\nolimits_{B\subset A}
 \Hom_R(K_\bu^B(R),P).
$$

 For every $n\ge0$, \,Lemma~\ref{derived-projlim-of-homs}
(with the poset $\Xi$ of all finite subsets $B\subset A$,
finitely generated free $R$\+modules $F_B$, and an infinitely
generated free $R$\+module~$F$) tells that
$\varprojlim^i_{B\subset A}\Hom_R(K_n^B(R),P)=0$ for all $i\ge1$.

 On the other hand, the complex $\Hom_R(K_\bu^B(R),P)$ has its only
nonzero cohomology module situated in the cohomological degree~$n$
equal to the cardinality of~$B$ (as $P$ is a flat module over
the ring $R_B=k[x_\alpha:\alpha\in B]$).
 By Lemma~\ref{derived-projlim-of-eventually-vanishing}, we have
$\varprojlim^i_{B\subset A} H^n\Hom_R(K_\bu^B(R),P)=0$
for all $i\ge0$ and $n\ge0$.

 In the context of Lemma~\ref{spectral-sequence-lemma}, put
$C_B^\bu=\Hom_R(K_\bu^B(R),P)$.
 Then we know that ${}'\!E_2^{pq}=\varprojlim^p_{B\subset A}
H^q(C_B^\bu)=0$ for all $p$, $q\ge0$, and
${}''\!E_1^{pq}=\varprojlim^q_{B\subset A}C_B^p=0$ for all $q\ge1$.
 Thus $E^n=0$ and
$H^n(\varprojlim\nolimits_{B\subset A}\Hom_R(K_\bu^B(R),P))
={}''\!E_2^{n,0}=0$ for all $n\ge0$.
\end{proof}

\Section{The Projective Construction of Acyclic Complex of Projectives}

 Now we are ready to present the ``projective resolution construction
of a bounded below acyclic complex of projective modules''.

\begin{cor} \label{projective-module-resolution-construction}
 Let $R=k[x_\alpha:\alpha\in A]$ be the ring of polynomials in
infinitely many variables over a field~$k$, and let
\begin{equation} \label{projective-module-resolution-of-k}
 0\llarrow k\llarrow P_0\llarrow P_1\lrarrow P_2\llarrow\dotsb
\end{equation}
be a projective resolution of the one-dimensional $R$\+module~$k$.
 Let $P$ be a projective $R$\+module such that the $R$\+module
$P_n$ is a direct summand of $P$ for all $n\ge0$.
 Let
\begin{equation} \label{acyclic-complex-of-projs-from-proj-res}
 0\lrarrow\Hom_R(P_0,P)\lrarrow\Hom_R(P_1,P)\lrarrow
 \Hom_R(P_2,P)\lrarrow\dotsb
\end{equation}
be the complex obtained by applying the contravariant functor\/
$\Hom_R({-},P)$ to the truncated
resolution~\eqref{projective-module-resolution-of-k}.
 Then~\eqref{acyclic-complex-of-projs-from-proj-res} is
a bounded below, noncontractible, acyclic complex of finitely
generated projective left modules over the ring $S=\Hom_R(P,P)$.
\end{cor}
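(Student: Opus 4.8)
The plan is to follow the proof of Corollary~\ref{injective-module-coresolution-construction} essentially verbatim, with Rickard's theorem replaced by its dual, Theorem~\ref{dual-rickard}, and with part~(a) of Lemma~\ref{dress-lemma} replaced by part~(b). The three things to check are acyclicity, projectivity and finite generation of the terms, and noncontractibility.

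\emph{Acyclicity.} Since $P$ is projective, it is flat, so Theorem~\ref{dual-rickard} gives $\Ext^n_R(k,P)=0$ for all $n\ge0$. Read the truncation of~\eqref{projective-module-resolution-of-k} as a homologically graded complex $P_\bu=(\dotsb\to P_1\to P_0\to0)$; this is a projective resolution of~$k$ in the homological grading. Applying the contravariant functor $\Hom_R({-},P)$ turns $P_\bu$ into the cohomologically graded complex~\eqref{acyclic-complex-of-projs-from-proj-res}, whose cohomology in degree~$n$ is by definition $\Ext^n_R(k,P)$. Hence~\eqref{acyclic-complex-of-projs-from-proj-res} is acyclic; note that exactness at the leftmost term is the vanishing $\Ext^0_R(k,P)=\Hom_R(k,P)=0$, the $n=0$ case of Theorem~\ref{dual-rickard}.

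\emph{The terms.} With $S=\Hom_R(P,P)$ acting on each $\Hom_R(P_n,P)$ on the left by post-composition, the left regular module ${}_SS$ is $\Hom_R(P,P)$. A splitting $P\cong P_n\oplus Q_n$ in $R\Modl$ induces a splitting $\Hom_R(P,P)\cong\Hom_R(P_n,P)\oplus\Hom_R(Q_n,P)$ of left $S$\+modules (both the surjection $g\mapsto g|_{P_n}$ and its splitting $f\mapsto f\circ\pi_n$ are $S$\+linear for the post-composition action). Thus each $\Hom_R(P_n,P)$ is a direct summand of ${}_SS$, so~\eqref{acyclic-complex-of-projs-from-proj-res} is even a complex of cyclic, in particular finitely generated projective, left $S$\+modules.

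\emph{Noncontractibility.} I would reuse the observation recorded in the proof of Corollary~\ref{injective-module-coresolution-construction}: for a full subcategory $\sB\subset\sA$, a complex in $\sB$ is contractible in $\sA$ if and only if it is contractible in $\sB$. By Lemma~\ref{dress-lemma}(b) with $\sA=R\Modl$ and $M=P$, the functor $\Hom_R({-},P)$ restricts to an anti-equivalence of additive categories $\add(P)^\sop\simeq S\modl_\proj$. The complex $P_\bu$ lies in $\add(P)$ and is noncontractible in $R\Modl$ (it is nonacyclic there, its homology being $k$ in degree~$0$), hence noncontractible as a complex in $\add(P)$. A fully faithful functor both preserves and reflects contractibility of complexes, and a contractible complex stays contractible when the differential is reversed; so the anti-equivalence sends the noncontractible $P_\bu$ to a noncontractible complex~\eqref{acyclic-complex-of-projs-from-proj-res} in $S\modl_\proj$, which is therefore also noncontractible in $S\Modl$.

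\emph{Main obstacle.} I expect no real obstacle here, since all the substance has been placed in Theorem~\ref{dual-rickard}, which is already available, and the remainder is formal. The one point needing care is bookkeeping around the contravariant $\Hom$: that it converts the homologically graded resolution into exactly the cohomologically graded complex~\eqref{acyclic-complex-of-projs-from-proj-res}, and that the resulting $S$\+action is a \emph{left} action by post-composition — which is why the relevant ring in this construction is $S=\Hom_R(P,P)$ itself rather than its opposite, in contrast with the covariant situation of Corollary~\ref{injective-module-coresolution-construction}.
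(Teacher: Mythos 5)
Your proposal is correct and follows essentially the same route as the paper's own proof: acyclicity from Theorem~\ref{dual-rickard} (using that projective implies flat), the terms being direct summands of ${}_SS$ via the splitting of $P$, and noncontractibility via the anti-equivalence of Lemma~\ref{dress-lemma}(b) together with the observation that contractibility in a full subcategory is equivalent to contractibility in the ambient category. The extra bookkeeping you supply (identifying the cohomology with $\Ext^n_R(k,P)$ and checking $S$\+linearity of the splitting) is consistent with, and slightly more detailed than, the paper's argument.
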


\begin{proof}
 The complex~\eqref{acyclic-complex-of-projs-from-proj-res} is acyclic
by Theorem~\ref{dual-rickard}.
 The left $S$\+module $\Hom_R(P_n,P)$ is a direct summand of the left
$S$\+module $\Hom_R(P,P)=S$ for every $n\ge0$, since the $R$\+module
$P_n$ is a direct summand of~$P$.
 So~\eqref{acyclic-complex-of-projs-from-proj-res} is even a complex
of cyclic projective left $S$\+modules.

 The proof of the assertion that the complex of
$S$\+modules~\eqref{acyclic-complex-of-projs-from-proj-res} is not
contractible is similar to the argument in the proof of
Corollary~\ref{injective-module-coresolution-construction}.
 By Lemma~\ref{dress-lemma}(b) (for $\sA=R\Modl$ and $M=P$),
the functor $\Hom_R({-},P)$ is an anti-equivalence of categories
$\add(P)^\sop\simeq S\modl_\proj$.
 The truncated resolution~\eqref{projective-module-resolution-of-k},
$$
 0\llarrow P_0\llarrow P_1\lrarrow P_2\llarrow\dotsb
$$
is a noncontractible (since nonacyclic) complex in $R\Modl$ with
the terms belonging to $\add(P)$, so it is a noncontractible complex
in $\add(P)$.
 Applying the anti-equivalence of additive categories
$\add(P)^\sop\simeq S\modl_\proj$, we obtain a noncontractible
complex~\eqref{acyclic-complex-of-projs-from-proj-res} in
$S\modl_\proj$, which is consequently also noncontractible in
$S\Modl$.
 It is important for this argument that the contravariant functor
$\Hom_R({-},P)\:\add(P)^\sop\lrarrow S\Modl$ is fully faithful.
\end{proof}

\Section{Brief Preliminaries on Coalgebras}

 In this section and the next two, we consider comodules and
contramodules over coassociative, counital coalgebras $\C$
over a field~$k$.
 We refer to the book~\cite{Swe} and the survey
papers~\cite[Section~1]{Prev}, \cite[Sections~3 and~8]{Pksurv} for
background material on coalgebras, comodules, and contramodules.

 For any coalgebra $\C$, there are locally finite Grothendieck
abelian categories of left and right $\C$\+comodules $\C\Comodl$ and
$\Comodr\C$, and a locally presentable abelian category of left
$\C$\+contramodules $\C\Contra$.
 There are enough injective objects in $\C\Comodl$, and they are
precisely the direct summands of the \emph{cofree} left
$\C$\+comodules $\C\ot_k V$ (where $V$ ranges over the $k$\+vector
spaces).
 Dual-analogously, there are enough projective objects in
$\C\Contra$, and they are precisely the direct summands of
the \emph{free} left $\C$\+contramodules $\Hom_k(\C,V)$ (where
$V\in k\Vect$).

 The additive categories of injective left $\C$\+comodules and
projective left $\C$\+con\-tra\-mod\-ules are naturally equivalent,
\begin{equation} \label{underived-co-contra-eqn}
 \Psi_\C\:\C\Comodl_\inj\,\simeq\,\C\Contra_\proj\,:\!\Phi_\C.
\end{equation}
 The equivalence is provided by the restrictions of the adjoint
functors
$$
 \Psi_\C\:\C\comodl\,\leftrightarrows\C\Contra\,:\!\Phi_\C,
$$
the functor $\Phi_\C$ being the left adjoint and $\Psi_\C$
the right adjoint.
 The functors $\Psi_\C$ and $\Phi_\C$ are constructed as
$$
 \Psi_\C(\cM)=\Hom_\C(\C,\cM)
 \quad\text{and}\quad
 \Phi_\C(\fP)=\C\ocn_\C\fP
$$
for all $\cM\in\C\Comodl$ and $\fP\in\C\Contra$.
 Here $\ocn_\C\:\Comodr\C\times\C\Contra\rarrow k\Vect$ is
the functor of \emph{contratensor product} over a coalgebra~$\C$,
while $\Hom_\C$ denotes the Hom functor in the comodule category
$\C\Comodl$.
 The equivalence of additive
categories~\eqref{underived-co-contra-eqn} is called
the (\emph{underived}) \emph{comodule-contramodule correspondence}.
 We refer to~\cite[Sections~1.2 and~3.1]{Prev},
\cite[Sections~8.6\+-8.7]{Pksurv}, or~\cite[Sections~0.2.6
and~5.1]{Psemi} for a more detailed discussion.

 In fact, we are only interested in one special kind of coalgebras,
namely, the \emph{symmetric coalgebra} $\Sym(U)$ of a $k$\+vector
space~$U$.
 To define the symmetric coalgebra, consider the \emph{tensor coalgebra}
$\Ten(U)=\bigoplus_{n=0}^\infty U^{\ot n}$, as defined, e.~g.,
in~\cite[Section~2.3]{Pksurv} (where the notation is slightly
different).
 The tensor coalgebra is the cofree conilpotent coalgebra cospanned
by~$U$ \,\cite[Remark~3.2]{Pksurv}; it is also naturally graded.
 The symmetric coalgebra is simplest defined as the graded
subcoalgebra in $\Ten(U)$ whose grading components $\Sym_n(U)
\subset\Ten_n(U)=U^{\ot n}$ are the subspaces of symmetric tensors
$\Sym_n(U)\subset U^{\ot n}$ in the tensor powers of the vector
space~$U$.
 So the whole symmetric coalgebra is $\Sym(U)=\bigoplus_{n=0}^\infty
\Sym_n(U)=k\oplus U\oplus\Sym_2(U)\oplus\dotsb$.

 Following the discussion in~\cite[Section~1.3\+-1.4]{Prev}
or~\cite[Section~8.3]{Pksurv}, coalgebras $\C$ can be described (and in
fact, defined) in terms of their vector space dual algebras $\C^*=
\Hom_k(\C,k)$, which carry natural linearly compact
(\,$=$~pseudocompact) topologies.
 In particular, if $U$ is a finite-dimensional $k$\+vector space
with a basis $x_1^*$,~\dots,~$x_m^*$, then the dual algebra
$\Sym(U)^*$ to the symmetric coalgebra $\Sym(U)$ is the topological
algebra of formal Taylor power series $\Sym(U)^*=k[[x_1,\dotsc,x_m]]$.

 Generally speaking, for an infinite-dimensional $k$\+vector space
$W$, one has $\Sym(W)=\varinjlim_{U\subset W}\Sym(U)$ and
$\Sym(W)^*=\varprojlim_{U\subset W}\Sym(U)^*$, where $U$ ranges over
the finite-dimensional vector subspaces of~$W$.
 So, if $\{x_\alpha^*:\alpha\in A\}$ is a $k$\+vector space basis
of $W$, indexed by some set $A$, then
$\Sym(W)^*=\varprojlim_{B\subset A}k[[x_\alpha:\alpha\in B]]$,
where $B$ ranges over the finite subsets of~$A$.
 Here, given two finite subsets $B'\subset B''\subset A$, the transition
map $k[[x_\alpha:\alpha\in B'']]\rarrow k[[x_\alpha:\alpha\in B']]$
in the projective system takes $x_\alpha$ to~$x_\alpha$ for
all $\alpha\in B'$ and $x_\beta$ to~$0$ for all $\beta\in
B''\setminus B'$.
 Such rings
$\Sym(W)^*=\varprojlim_{B\subset A}k[[x_\alpha:\alpha\in B]]$ are
the ``commutative rings of formal power series in infinitely many
variables'' that we are interested in.
{\hbadness=1100\par}

\Section{Comodule and Contramodule Acyclicity Theorems}

 As above, we denote by $W$ an infinite-dimensional $k$\+vector space
with a basis $\{x_\alpha^*:\alpha\in A\}$ indexed by a set~$A$.
 Given a finite set $B$, we let $\R_B=k[[x_\alpha:\alpha\in B]]$ be
the (topological) ring of commutative formal Taylor power series in
finitely many variables indexed by~$B$.
 Furthermore, we put $\R=\varprojlim_{B\subset A}\R_B$ (with
the transition maps described in the previous section).
 So, denoting by $U_B\subset W$ the finite-dimensional vector subspace
spanned by $\{x_\alpha^*:\alpha\in B\}$, we have
$\R_B=\Sym(U_B)^*$ and
$\R=\varprojlim_{B\subset A}k[[x_\alpha:\alpha\in B]]=\Sym(W)^*$.
 Let us also introduce the notation $\C_B=\Sym(U_B)$ and
$\C=\Sym(W)$ for the symmetric coalgebras.

 As in the proof of Theorem~\ref{dual-rickard}, we start with
considering the two-term Koszul complex of free $\R_B$\+modules
with one generator
\begin{equation} \label{two-term-koszul-over-R_B}
 \dotsb\lrarrow 0\lrarrow\R_B\overset{x_\alpha*}\lrarrow\R_B\lrarrow 0
 \lrarrow\dotsb
\end{equation}
situated in the cohomological degrees~$-1$ and~$0$
(where $\alpha\in B$).
 Denote by $K^B_\bu(\R_B)$ the tensor product, taken over the ring
$\widehat R_B$, of the complexes~\eqref{two-term-koszul-over-R_B}.
 As the elements $\{x_\alpha:\alpha\in B\}$ form a regular sequence
in the formal power series ring $\R_B$, the complex $K^B_\bu(\R_B)$
is a finite resolution of the one-dimensional $\R_B$\+module
$k=\R_B/\sum_{\alpha\in B}x_\alpha\R_B$ by finitely generated
free $\R_B$\+modules.

 The (augmented) Koszul complex $K^B_\bu(\R_B)\rarrow k$ is a complex
of linearly compact topological $k$\+vector spaces; so it can be
obtained by applying the vector space dualization functor
$\Hom_k({-},k)$ to a certain complex of discrete vector spaces.
 The latter complex has the form
\begin{multline} \label{fin-cogen-symm-comodule-coresolution}
 0\lrarrow k\lrarrow\Sym(U_B)\lrarrow\Sym(U_B)\ot_k U_B \\
 \lrarrow \Sym(U_B)\ot_k\Lambda^2(U_B) \lrarrow\dotsb\lrarrow
 \Sym(U_B)\ot_k\Lambda^m(U_B)\lrarrow0,
\end{multline}
where $m=\dim U_B$ and $\Lambda^n(V)$, \,$n\ge0$, denotes the exterior
powers of a vector space~$V$.
 The complex~\eqref{fin-cogen-symm-comodule-coresolution} is
an injective/cofree $\C_B$\+comodule coresolution of the trivial
one-dimensional comodule~$k$ over the conilpotent coalgebra
$\C_B=\Sym(U_B)$.

 Passing to the direct limit of
the finite complexes~\eqref{fin-cogen-symm-comodule-coresolution} over
all the finite subsets $B\subset A$, we obtain a bounded below complex
\begin{multline} \label{infinite-symm-comodule-coresolution}
 0\lrarrow k\lrarrow\Sym(W)\lrarrow\Sym(W)\ot_k W \\
 \lrarrow \Sym(W)\ot_k\Lambda^2(W) \lrarrow\dotsb\lrarrow
 \Sym(W)\ot_k\Lambda^n(W)\lrarrow\dotsb
\end{multline}
 The complex~\eqref{infinite-symm-comodule-coresolution} is
an injective/cofree $\C$\+comodule coresolution of the trivial
one-dimensional comodule~$k$ over the conilpotent coalgebra
$\C=\Sym(W)$.

 One can easily check that
the coresolutions~\eqref{fin-cogen-symm-comodule-coresolution}
and~\eqref{infinite-symm-comodule-coresolution} are well-defined
and functorial for any $k$\+vector spaces $U$ (in place of~$U_B$)
and $W$, and do not depend on the choice of any bases in the vector
spaces.
 In fact, the differential $\Sym(W)\ot_k\Lambda^n(W)\rarrow
\Sym(W)\ot_k\Lambda^{n+1}(W)$ can be constructed as the composition
$\Sym(W)\ot_k\Lambda^n(W)\rarrow\Sym(W)\ot_kW\ot_k\Lambda^n(W)
\rarrow\Sym(W)\ot_k\Lambda^{n+1}(W)$ of the map induced
by the comultiplication map $\Sym(W)\rarrow\Sym(W)\ot_kW$ and
the map induced by the multiplication map $W\ot_k\Lambda^n(W)
\rarrow\Lambda^{n+1}(W)$.

 Applying the vector space dualization functor $\Hom_k({-},k)$ to
the complex~\eqref{infinite-symm-comodule-coresolution}, we obtain
a bounded above complex
\begin{multline} \label{infinite-symm-contramodule-resolution}
 0\llarrow k\llarrow\Hom_k(\C,k)\llarrow\Hom_k(\C,W^*) \\
 \llarrow \Hom_k(\C,\Lambda^2(W)^*) \llarrow\dotsb\llarrow
 \Hom_k(\C,\Lambda^n(W)^*)\llarrow\dotsb
\end{multline}
 The complex~\eqref{infinite-symm-contramodule-resolution} is
a projective/free $\C$\+contramodule resolution of the trivial
one-dimensional $\C$\+contramodule~$k$.

 Applying the functor $\Phi_\C=\C\ocn_\C{-}$ to the truncated
$\C$\+contramodule
resolution~\eqref{infinite-symm-contramodule-resolution}, we obtain
a bounded above complex of injective/cofree $\C$\+comodules
\begin{multline} \label{acyclic-comodule-complex}
 0\llarrow\C\llarrow\C\ot_kW^*\llarrow\C\ot_k\Lambda^2(W)^* \\
 \llarrow\dotsb\llarrow\C\ot_k\Lambda^n(W)^*\llarrow\dotsb
\end{multline}
 Applying the functor $\Psi_\C=\Hom_\C(\C,{-})$ to the truncated
$\C$\+comodule coresolution~\eqref{infinite-symm-comodule-coresolution},
we obtain a bounded below complex of projective/free $\C$\+contramodules
\begin{multline} \label{acyclic-contramodule-complex}
 0\lrarrow\Hom_k(\C,k)\lrarrow\Hom_k(\C,W) \\
 \lrarrow \Hom_k(\C,\Lambda^2(W)) \lrarrow\dotsb\lrarrow
 \Hom_k(\C,\Lambda^n(W))\lrarrow\dotsb
\end{multline}

\begin{thm} \label{comodule-complex-is-acyclic-theorem}
 For any infinite-dimensional $k$\+vector space $W$,
the complex of cofree comodules~\eqref{acyclic-comodule-complex} is
acyclic (i.~e., its cohomology spaces vanish in all the degrees).
\end{thm}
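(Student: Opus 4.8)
The plan is to realize $\eqref{acyclic-comodule-complex}$ as an increasing union of subcomplexes indexed by the finite subsets $B\subseteq A$, to compute the homology of each of them explicitly, and to observe that in the resulting (exact) filtered colimit every fixed degree is eventually annihilated --- because the homology of the $B$\+th subcomplex is concentrated in degrees $\ge|B|$. Throughout, acyclicity of $\eqref{acyclic-comodule-complex}$ means that the homology $H_n$ at each term $\C\ot_k\Lambda^n(W)^*$ vanishes. (Equivalently, one is computing the left derived functors of $\Phi_\C=\C\ocn_\C{-}$ on the trivial contramodule~$k$; the argument below shows these vanish, but it is self\+contained.)

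First I would record the shape of the differential of $\eqref{acyclic-comodule-complex}$: on cofree comodules it takes the form $c\ot\lambda\mapsto\sum_i c'_i\ot\iota^*_{w_i}(\lambda)$, where $\sum_i c'_i\ot w_i\in\C\ot W$ is the component of $\Delta(c)$ lying in $\C\ot\Sym_1(W)$ and $\iota^*_w\:\Lambda^n(W)^*\rarrow\Lambda^{n-1}(W)^*$ denotes contraction by $w\in W$. For a finite subset $B\subseteq A$, let $U_B\subseteq W$ be spanned by $\{x_\alpha^*:\alpha\in B\}$ and $\C_B=\Sym(U_B)\subseteq\C=\Sym(W)$ the corresponding subcoalgebra, so that $\C=\varinjlim_B\C_B$. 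Since $\Delta(\C_B)\subseteq\C_B\ot\C_B$, this differential carries $\C_B\ot_k\Lambda^n(W)^*$ into $\C_B\ot_k\Lambda^{n-1}(W)^*$ and moreover involves only contractions by vectors of $U_B$; thus $C^\bu_{(B)}:=\C_B\ot_k\Lambda^\bu(W)^*$ is a subcomplex of $\eqref{acyclic-comodule-complex}$, and $\eqref{acyclic-comodule-complex}=\bigcup_B C^\bu_{(B)}$. As homology commutes with filtered colimits, $H_n(\eqref{acyclic-comodule-complex})=\varinjlim_B H_n(C^\bu_{(B)})$ for every~$n$.

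Next I would compute $H_\bu(C^\bu_{(B)})$. Choose a complement $W=U_B\oplus U'_B$ ($U'_B$ spanned by the remaining basis vectors); then $\Lambda^n(W)^*=\prod_{p+q=n}\Lambda^p(U_B)^*\ot_k\Lambda^q(U'_B)^*$, using finite\+dimensionality of~$U_B$. Since the differential of $C^\bu_{(B)}$ only contracts in the $U_B$\+directions, it respects this decomposition, and $C^\bu_{(B)}$ breaks up --- a finite product in each homological degree --- as the product over $q\ge0$ of the complexes $K^B_\bu\ot_k\Lambda^q(U'_B)^*$, the $q$\+th one shifted up to start in degree~$q$, where $K^B_\bu=(\C_B\ot_k\Lambda^\bu(U_B)^*,\ \text{Koszul contraction differential})$. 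Now $K^B_\bu$ is the Koszul complex of the $m=|B|$ commuting operators $D_\alpha$ ($\alpha\in B$) on $\C_B=\Sym(U_B)$ --- the coderivations dual to multiplication by $x_\alpha$ on $\R_B$ --- and as $\Sym(U_B)=\bigotimes_{\alpha\in B}\Sym(kx_\alpha^*)$ with $D_\alpha$ acting on the $\alpha$\+th tensor factor alone, $K^B_\bu$ is the tensor product over $\alpha\in B$ of the two\+term complexes $\Sym(kx_\alpha^*)\xrightarrow{D_\alpha}\Sym(kx_\alpha^*)$ (in degrees $1$ and~$0$). Each $D_\alpha$ is surjective with one\+dimensional kernel (the constants), so each such factor has homology $k$ in degree~$1$; by the K\"unneth formula over~$k$, $H_\bu(K^B_\bu)=k$, concentrated in degree~$m$. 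Therefore $H_n(C^\bu_{(B)})=\Lambda^{n-m}(U'_B)^*$ for $n\ge m$, and $H_n(C^\bu_{(B)})=0$ for $n<m$.

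Finally, for any fixed~$n$ the finite subsets $B\subseteq A$ with $|B|>n$ are cofinal (as $A$ is infinite), and $H_n(C^\bu_{(B)})=0$ for every such~$B$; hence $H_n(\eqref{acyclic-comodule-complex})=\varinjlim_B H_n(C^\bu_{(B)})=0$, which is the assertion of the theorem. The step I expect to demand the most care is the identification of $C^\bu_{(B)}$ as a (degreewise finite) product of shifted Koszul complexes of coderivations together with the computation of $H_\bu(K^B_\bu)$: this is a dual incarnation of the regularity of the $m$\+dimensional formal power series ring~$\R_B$, and it is precisely the unboundedness of the nonvanishing degree $m=|B|$ as $B$ grows that makes the theorem true --- the analogous statement for a finite\+dimensional~$W$ is false.
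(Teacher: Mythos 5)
Your proposal is correct and follows essentially the same route as the paper's proof: exhibit \eqref{acyclic-comodule-complex} as the filtered union of the subcomplexes $\C_B\ot_k\Lambda^\bu(W)^*$ over finite $B\subset A$, identify each such subcomplex as the tensor product of the finite dual Koszul complex \eqref{dual-Koszul-of-comodules-over-fin-dim} with the zero-differential complex of dual exterior powers of a complement, conclude that its cohomology sits in degrees of absolute value $\ge|B|$, and let it escape to infinity in the colimit. The only (harmless) difference is that you compute the one-dimensional cohomology of the finite dual Koszul complex directly by factoring it into rank-one pieces and applying K\"unneth, where the paper deduces it from the duality with the Koszul resolution of~$k$ set up beforehand.
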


\begin{proof}
 This was stated in~\cite[Section~0.2.7]{Psemi} (as a part of
introductory/preliminary material for the book).
 The proof is not difficult.

 The complex~\eqref{acyclic-comodule-complex} is the direct limit of
its subcomplexes
\begin{multline} \label{complex-of-comodules-over-fin-dim}
 0\llarrow\C_B\llarrow\C_B\ot_kW^*\llarrow\C_B\ot_k\Lambda^2(W)^* \\
 \llarrow\dotsb\llarrow\C_B\ot_k\Lambda^n(W)^*\llarrow\dotsb
\end{multline}
taken over the directed poset of all finite subsets $B\subset A$.
 The complex~\eqref{complex-of-comodules-over-fin-dim}, which is
a complex of comodules over the subcoalgebra $\C_B=\Sym(U_B)$
of the coalgebra $\C=\Sym(W)$, can be obtained by applying
the cotensor product functor $\C_B\oc_\C{-}$ to
the complex~\eqref{acyclic-comodule-complex}
(see~\cite[Sections~2.5\+-2.6]{Prev} or~\cite[Section~0.2.1
or~1.2.1]{Psemi}).

 The complex~\eqref{complex-of-comodules-over-fin-dim} is \emph{not}
acyclic, but its cohomology spaces gradually vanish as the size of
the finite subset $B\subset A$ grows.
 Indeed, applying the vector space dualization functor $\Hom_k({-},k)$
to the finite complex~\eqref{fin-cogen-symm-comodule-coresolution},
we obtain a finite Koszul complex that was denoted above
by~$K^B_\bu(\R_B)$.
 It has the form
\begin{multline} \label{fin-cogen-symm-contramodule-resolution}
 0\llarrow k\llarrow\Hom_k(\C_B,k)\llarrow\Hom_k(\C,U_B^*) \\
 \llarrow \Hom_k(\C_B,\Lambda^2(U_B)^*) \llarrow\dotsb\llarrow
 \Hom_k(\C_B,\Lambda^m(U_B)^*)\llarrow 0
\end{multline}
and can be viewed as a projective/free $\C_B$\+contramodule resolution
of the trivial one-dimensional $\C_B$\+contramodule~$k$.

 Applying the functor $\Phi_{\C_B}=\C_B\ocn_{\C_B}{-}$ to the truncated
$\C_B$\+contramodule
resolution~\eqref{fin-cogen-symm-contramodule-resolution}, we obtain
a finite complex of injective/cofree $\C_B$\+comodules
\begin{multline} \label{dual-Koszul-of-comodules-over-fin-dim}
 0\llarrow\C_B\llarrow\C_B\ot_kU_B^*\llarrow\C_B\ot_k\Lambda^2(U_B)^*
 \\ \llarrow\dotsb\llarrow\C_B\ot_k\Lambda^m(U_B)^*\llarrow 0.
\end{multline}
 The only cohomology space of
the complex~\eqref{dual-Koszul-of-comodules-over-fin-dim} is
the one-dimensional $k$\+vector space $\Lambda^m(U_B)^*$ situated
in the cohomological degree~$-m$, i.~e., at the rightmost term.

 Consider the direct sum decomposition $W=U_B\oplus V_B$,
where $V_B\subset W$ is the subspace with the basis
$\{x_\alpha^*:\alpha\in A\setminus B\}$.
 Consider the graded dual vector space to the exterior algebra
$\bigoplus_{n=0}^\infty\Lambda^n(V_B)$, and view it as a complex
\begin{equation} \label{zero-differential-dual-exterior-powers}
 0\llarrow k\overset 0\llarrow V_B^*\overset 0\llarrow
 \Lambda^2(V_B)^*\overset0\llarrow\dotsb\overset0\llarrow
 \Lambda^n(V_B)^*\overset0\llarrow\dotsb
\end{equation}
with zero differential.
 Then the complex~\eqref{complex-of-comodules-over-fin-dim} is
the tensor product, taken over the field~$k$, of
the complexes~\eqref{dual-Koszul-of-comodules-over-fin-dim}
and~\eqref{zero-differential-dual-exterior-powers}.
 Accordingly, the cohomology spaces of
the complex~\eqref{complex-of-comodules-over-fin-dim} are concentrated
in the cohomological degrees~$\le -m$, where $m$~is the cardinality
of the set~$B$.

 As the size of the subset $B\subset A$ grows, the cohomology of
the complex~\eqref{complex-of-comodules-over-fin-dim} move away
and disappear at the cohomological degree~$-\infty$.
 So the direct limit~\eqref{acyclic-comodule-complex} of
the complexes~\eqref{complex-of-comodules-over-fin-dim} is acyclic.
\end{proof}

\begin{thm} \label{contramodule-complex-is-acyclic-theorem}
 For any infinite-dimensional $k$\+vector space $W$,
the complex of free contramodules~\eqref{acyclic-contramodule-complex}
is acyclic (i.~e., its cohomology spaces vanish in all the degrees).
\end{thm}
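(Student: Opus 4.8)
The plan is to follow the pattern of the proof of Theorem~\ref{dual-rickard}, with the Koszul resolution of the polynomial ring replaced by the Koszul cochain complex of free contramodules over the conilpotent coalgebras $\C_B=\Sym(U_B)$. So, for each finite subset $B\subset A$, let $G_B^\bu$ be the complex of free $\C_B$\+contramodules obtained from~\eqref{acyclic-contramodule-complex} by replacing the coalgebra $\C$ by its subcoalgebra $\C_B$: its $n$\+th term is $\Hom_k(\C_B,\Lambda^n(W))$, and its differential is built, as for~\eqref{infinite-symm-comodule-coresolution}, from the comultiplication map $\C_B\to\C_B\ot_kU_B$ followed by the wedge product $U_B\ot_k\Lambda^n(W)\to\Lambda^{n+1}(W)$ (equivalently, $G_B^\bu=\Psi_{\C_B}$ applied to the truncated cofree $\C_B$\+comodule complex $0\to\C_B\to\C_B\ot_kW\to\C_B\ot_k\Lambda^2(W)\to\dotsb$). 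Since $\C$ is the directed union of the $\C_B$, one has $\Hom_k(\C,\Lambda^n(W))=\varprojlim_B\Hom_k(\C_B,\Lambda^n(W))$, and the differentials are compatible under the subcoalgebra inclusions (the comultiplication $\C\to\C\ot_kW$ restricts on $\C_B$ to $\C_B\to\C_B\ot_kU_B$); hence the complex~\eqref{acyclic-contramodule-complex} is the inverse limit $\varprojlim_B G_B^\bu$.

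Next I would compute the cohomology of $G_B^\bu$. Fix a vector space decomposition $W=U_B\oplus V_B$. From the resulting $\Lambda^n(W)=\bigoplus_{i+j=n}\Lambda^i(U_B)\ot_k\Lambda^j(V_B)$ and the identity $\Hom_k(\C_B,\Lambda^i(U_B)\ot_k\Lambda^j(V_B))=\Hom_k(\C_B,\Lambda^j(V_B))\ot_k\Lambda^i(U_B)$ (valid since $\Lambda^i(U_B)$ is finite-dimensional) one obtains a decomposition of $G_B^\bu$ as a direct sum over $j\ge0$ of subcomplexes. As the differential of $G_B^\bu$ factors through $\C_B\to\C_B\ot_kU_B$ and then a wedge product by an element of $U_B$, it preserves the $\Lambda^\bu(V_B)$\+grading, and the $j$\+th summand is the Koszul cochain complex $\bigl[\Hom_k(\C_B,\Lambda^j(V_B))\ot_k\Lambda^\bu(U_B),\ d\bigr]$ with $d(\phi\ot\omega)=\sum_{\alpha\in B}(x_\alpha*\phi)\ot(x_\alpha^*\wedge\omega)$, sitting in cohomological degrees $j,j+1,\dotsc,j+|B|$. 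Now $(x_\alpha)_{\alpha\in B}$ is a regular sequence both in $\R_B=k[[x_\alpha:\alpha\in B]]$ and on the free $\C_B$\+contramodule $\Hom_k(\C_B,\Lambda^j(V_B))$ — dividing out each $x_\alpha$ in turn simply deletes the corresponding variable, with $\Hom_k(\C_B,\Lambda^j(V_B))/(x_\alpha:\alpha\in B)\cong\Lambda^j(V_B)$ — so this Koszul cochain complex has cohomology concentrated in its top degree $j+|B|$. Consequently $H^n(G_B^\bu)=0$ for every $n<|B|$ (one also gets $H^n(G_B^\bu)\cong\Lambda^{\,n-|B|}(V_B)$ for $n\ge|B|$, but only the vanishing is needed).

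Finally I would run the two hypercohomology spectral sequences of Lemma~\ref{spectral-sequence-lemma} for the projective system of complexes $(G_B^\bu)$ indexed by the finite subsets $B\subset A$, exactly as in the last part of the proof of Theorem~\ref{dual-rickard}. For the second spectral sequence, Lemma~\ref{derived-projlim-of-homs} applied with the field $k$ in the role of the ring $R$ (its hypotheses being automatic, as every $k$\+module is projective and $\C=\varinjlim_B\C_B$) gives $\varprojlim^q_B\Hom_k(\C_B,\Lambda^p(W))=0$ for $q\ge1$ and equals $\Hom_k(\C,\Lambda^p(W))$ for $q=0$; thus ${}''\!E_1$ is concentrated in the row $q=0$, where it is the complex~\eqref{acyclic-contramodule-complex}, and $E^n={}''\!E_2^{n,0}$ is the $n$\+th cohomology space of~\eqref{acyclic-contramodule-complex}. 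For the first spectral sequence, ${}'\!E_2^{pq}=\varprojlim^p_B H^q(G_B^\bu)$; by the previous step, for each fixed $q$ the projective system $\bigl(H^q(G_B^\bu)\bigr)_B$ vanishes for all $B$ with $|B|>q$, so Lemma~\ref{derived-projlim-of-eventually-vanishing} gives ${}'\!E_2^{pq}=0$ for all $p$ and $q$, whence $E^n=0$. Combining the two, the cohomology of~\eqref{acyclic-contramodule-complex} vanishes in every degree.

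I expect the cohomology computation for $G_B^\bu$ to be the main obstacle: pinning down the differential of $G_B^\bu$ as an honest Koszul cochain differential after the decomposition $W=U_B\oplus V_B$, and verifying that $(x_\alpha)_{\alpha\in B}$ acts regularly on the free contramodules $\Hom_k(\C_B,\Lambda^j(V_B))$, which are completed rather than ordinary tensor products of $\R_B$ with $\Lambda^j(V_B)$. The remaining steps are a faithful transcription of the proof of Theorem~\ref{dual-rickard}.
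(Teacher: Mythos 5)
Your proposal is correct and follows essentially the same route as the paper: realize \eqref{acyclic-contramodule-complex} as the inverse limit over finite $B\subset A$ of the quotient complexes with terms $\Hom_k(\C_B,\Lambda^n(W))$, show that their cohomology is concentrated in degrees $\ge|B|$, and conclude via the two hypercohomology spectral sequences together with Lemmas~\ref{derived-projlim-of-homs} and~\ref{derived-projlim-of-eventually-vanishing}. The only (harmless) difference is in the middle step: you decompose the finite-stage complex as a direct sum of Koszul cochain complexes of the regular sequence $(x_\alpha)_{\alpha\in B}$ acting on the free contramodules $\Hom_k(\C_B,\Lambda^j(V_B))$, whereas the paper identifies it as $\Hom_k({-},{-})$ from the finite dual Koszul complex of $\C_B$\+comodules into the zero-differential complex $\Lambda^\bu(V_B)$ --- both computations give cohomology isomorphic to $\Lambda^{n-|B|}(V_B)$ in degree $n\ge|B|$ and zero below.
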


\begin{proof}
 This was also stated in~\cite[Section~0.2.7]{Psemi}.
 The proof is only slightly more complicated than the proof of
the previous theorem, in that one needs to deal with inverse limits.
 However, we have done all the preparatory work already.

 The complex~\eqref{acyclic-contramodule-complex} is the inverse limit
of its quotient complexes
\begin{multline} \label{complex-of-contramodules-over-fin-dim}
 0\lrarrow\Hom_k(\C_B,k)\lrarrow\Hom_k(\C_B,W) \\
 \lrarrow \Hom_k(\C_B,\Lambda^2(W)) \lrarrow\dotsb\lrarrow
 \Hom_k(\C_B,\Lambda^n(W))\lrarrow\dotsb
\end{multline}
taken over the directed poset of all finite subsets $B\subset A$.
 The complex~\eqref{complex-of-contramodules-over-fin-dim}, which
is a complex of contramodules over the subcoalgebra $\C_B\subset\C$,
can be obtained by applying the Cohom functor $\Cohom_\C(\C_B,{-})$
to the complex~\eqref{acyclic-contramodule-complex}
(see~\cite[Sections~2.5\+-2.6]{Prev}
or~\cite[Section~0.2.4 or~3.2.1]{Psemi}).

 Similarly to the previous proof,
the complex~\eqref{complex-of-contramodules-over-fin-dim} is \emph{not}
acyclic, but its cohomology spaces gradually vanish as the size of
the finite subset $B\subset A$ grows.
 For the sake of completeness of the exposition, let us start with 
applying the functor $\Psi_{\C_B}=\Hom_{\C_B}(\C_B,{-})$ to
the truncated $\C_B$\+comodule
coresolution~\eqref{fin-cogen-symm-comodule-coresolution}.
 We obtain
a finite complex of projective/free $\C_B$\+contramodules
\begin{multline} \label{dual-Koszul-of-contramodules-over-fin-dim}
 0\lrarrow\Hom_k(\C_B,k)\lrarrow\Hom_k(\C_B,U_B) \\
 \lrarrow\Hom_k(\C_B,\Lambda^2(U_B))\lrarrow\dotsb\lrarrow
 \Hom_k(\C_B,\Lambda^m(U_B))\lrarrow0.
\end{multline} 
 The only cohomology space of
the complex~\eqref{dual-Koszul-of-contramodules-over-fin-dim} is
the one-dimensional $k$\+vector space $\Lambda^m(U_B)$ situated
in the cohomological degree~$m$, i.~e, at the rightmost term.
 In fact, the complex of
contramodules~\eqref{dual-Koszul-of-contramodules-over-fin-dim}
can be obtained by applying the vector space dualization functor
$\Hom_k({-},k)$ to the complex of
comodules~\eqref{dual-Koszul-of-comodules-over-fin-dim}.

 Consider the exterior algebra $\bigoplus_{n=0}^\infty
\Lambda^n(V_B)$, where as in the previous proof $W=U_B\oplus V_B$,
and view it as a complex
\begin{equation} \label{zero-differential-exterior-powers}
 0\lrarrow k\overset 0\lrarrow V_B\overset 0\lrarrow
 \Lambda^2(V_B)\overset0\lrarrow\dotsb\overset0\lrarrow
 \Lambda^n(V_B)\overset0\lrarrow\dotsb
\end{equation}
with zero differential.
 Then the complex~\eqref{complex-of-contramodules-over-fin-dim} is
the complex of $k$\+vector space morphisms, $\Hom_k({-},{-})$, from
the complex~\eqref{dual-Koszul-of-comodules-over-fin-dim} into
the complex~\eqref{zero-differential-exterior-powers}.
 Accordingly, the cohomology spaces of
the complex~\eqref{complex-of-contramodules-over-fin-dim} are
concentrated in the cohomological degrees~$\ge m$.

 The rest of the argument proceeds along the lines of the proof of
Theorem~\ref{dual-rickard}, based on
Lemmas~\ref{derived-projlim-of-homs}\+-\ref{spectral-sequence-lemma}.
 As mentioned above, the complex~\eqref{acyclic-contramodule-complex}
is the inverse limit of
the complexes~\eqref{complex-of-contramodules-over-fin-dim}
taken over the directed poset $\Xi$ of all finite subsets $B\subset A$
with respect to inclusion.
 At every cohomological degree~$n\ge0$,
\,Lemma~\ref{derived-projlim-of-homs} (for $R=k$, \
$F_B=\C_B$, and $P=\Lambda^n(W)$) tells that
$\varprojlim^i_{B\subset A}\Hom_k(\C_B,\Lambda^n(W))=0$
for all $i\ge1$.

 Denote by $C_B^\bu$
the complex~\eqref{complex-of-contramodules-over-fin-dim}.
 By Lemma~\ref{derived-projlim-of-eventually-vanishing}, we have
$\varprojlim^i_{B\subset A}H^n(C_B^\bu)=0$ for all $i\ge0$ and $n\ge0$.
 Now in the context of Lemma~\ref{spectral-sequence-lemma} we have
${}'\!E_2^{pq}=0$ for all $p$, $q\ge0$, and ${}''\!E_1^{pq}=0$ for
all $q\ge1$.
 Therefore, $E^n=0$ and $H^n(\varprojlim_{B\subset A}C_B^\bu)=
{}''\!E_2^{n,0}=0$ for all $n\ge0$.
\end{proof}

\Section{Two Contramodule Constructions of Acyclic Complexes of
Projectives}

 We have essentially already constructed the promised bounded above,
noncontractible, acyclic complex of injective comodules and
bounded below, noncontractible, acyclic complex of projective
contramodules over the cocommutative coalgebra $\C=\Sym(W)$.
 Let us state this as a corollary.

\begin{cor} \label{acyclic-of-inj-comod-proj-contra-cor}
 Let $W$ be an infinite-dimensional vector space over a field~$k$ and\/
$\C=\Sym(W)$ be the symmetric coalgebra.
 Then \par
\textup{(a)} the complex~\eqref{acyclic-comodule-complex} is a bounded
above, noncontractible, acyclic complex of injective comodules
over\/~$\C$; \par
\textup{(b)} the complex~\eqref{acyclic-contramodule-complex} is
a bounded below, noncontractible, acyclic complex of projective
contramodules over\/~$\C$.
\end{cor}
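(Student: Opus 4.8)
The plan is to read off all the formal properties directly from the constructions of the previous section, so that the only point requiring an argument is noncontractibility. The terms of~\eqref{acyclic-comodule-complex} are cofree, hence injective, left $\C$\+comodules, and the complex is concentrated in the cohomological degrees~$\le 0$, so it is bounded above; dually, the terms of~\eqref{acyclic-contramodule-complex} are free, hence projective, left $\C$\+contramodules, and the complex lives in the degrees~$\ge 0$, so it is bounded below. Acyclicity of~\eqref{acyclic-comodule-complex} is Theorem~\ref{comodule-complex-is-acyclic-theorem}, and acyclicity of~\eqref{acyclic-contramodule-complex} is Theorem~\ref{contramodule-complex-is-acyclic-theorem}. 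Thus it only remains to show that~\eqref{acyclic-comodule-complex} is not contractible in $\C\Comodl$ and that~\eqref{acyclic-contramodule-complex} is not contractible in $\C\Contra$.

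For this I would repeat the argument from the proofs of Corollaries~\ref{injective-module-coresolution-construction} and~\ref{projective-module-resolution-construction}, with the equivalences $\add(J)\simeq S\modl_\proj$ and $\add(P)^\sop\simeq S\modl_\proj$ used there replaced by the comodule-contramodule correspondence~\eqref{underived-co-contra-eqn}, $\Psi_\C\:\C\Comodl_\inj\simeq\C\Contra_\proj\,:\!\Phi_\C$. One uses the elementary fact (invoked already in the proof of Corollary~\ref{injective-module-coresolution-construction}) that a complex lying in a full subcategory $\sB$ of an additive category $\sA$ is contractible in $\sA$ if and only if it is contractible in~$\sB$; in particular, a complex of injective comodules is contractible in $\C\Comodl$ iff it is contractible in $\C\Comodl_\inj$, and a complex of projective contramodules is contractible in $\C\Contra$ iff it is contractible in $\C\Contra_\proj$. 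Now, by construction,~\eqref{acyclic-comodule-complex} is obtained by applying $\Phi_\C=\C\ocn_\C{-}$ to the truncation of the free-contramodule resolution~\eqref{infinite-symm-contramodule-resolution} with the augmentation term~$k$ removed; since~\eqref{infinite-symm-contramodule-resolution} is exact, this truncated complex is nonacyclic — its only nonzero cohomology object being a copy of the trivial $\C$\+contramodule~$k$ — hence not contractible as a complex in $\C\Contra_\proj$. Applying the equivalence of additive categories $\Phi_\C\:\C\Contra_\proj\simeq\C\Comodl_\inj$, we conclude that~\eqref{acyclic-comodule-complex} is noncontractible in $\C\Comodl_\inj$, hence in $\C\Comodl$; this proves~(a). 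Part~(b) is symmetric: \eqref{acyclic-contramodule-complex} is obtained by applying $\Psi_\C=\Hom_\C(\C,{-})$ to the truncation of the cofree-comodule coresolution~\eqref{infinite-symm-comodule-coresolution} with~$k$ removed, which is nonacyclic (only cohomology a copy of the trivial comodule~$k$), hence noncontractible in $\C\Comodl_\inj$, and the equivalence $\Psi_\C\:\C\Comodl_\inj\simeq\C\Contra_\proj$ then carries it to the noncontractible complex~\eqref{acyclic-contramodule-complex} in $\C\Contra_\proj\subset\C\Contra$.

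I do not expect a serious obstacle: once Theorems~\ref{comodule-complex-is-acyclic-theorem} and~\ref{contramodule-complex-is-acyclic-theorem} and the exactness of~\eqref{infinite-symm-comodule-coresolution} and~\eqref{infinite-symm-contramodule-resolution} are in hand, the rest is formal, resting on the facts that an equivalence of additive categories preserves and reflects contractibility of complexes and that contractibility is detected inside any full additive subcategory containing the complex. The one point that deserves a moment's attention — the natural candidate for the ``hard part'' — is the claim that the truncated (co)resolutions really are noncontractible, i.e., that deleting the single term~$k$ does not accidentally leave a split exact complex; but this is immediate, since the truncated complexes are visibly nonexact, retaining~$k$ as a nonzero cohomology object.
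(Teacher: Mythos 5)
Your proposal is correct and follows essentially the same route as the paper's proof: acyclicity is quoted from Theorems~\ref{comodule-complex-is-acyclic-theorem} and~\ref{contramodule-complex-is-acyclic-theorem}, and noncontractibility is deduced by observing that the truncated (co)resolutions are nonacyclic, hence noncontractible in the full subcategory of projective contramodules (resp.\ injective comodules), and then transporting this across the equivalence~\eqref{underived-co-contra-eqn}. No gaps.
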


\begin{proof}
 Part~(a): the complex~\eqref{acyclic-comodule-complex} is acyclic
by Theorem~\ref{comodule-complex-is-acyclic-theorem}.
 It remains to explain why the complex of
$\C$\+comodules~\eqref{acyclic-comodule-complex} is not contractible.

 The truncated
resolution~\eqref{infinite-symm-contramodule-resolution},
\begin{multline*} 
 0\llarrow\Hom_k(\C,k)\llarrow\Hom_k(\C,W^*) \\
 \llarrow \Hom_k(\C,\Lambda^2(W)^*) \llarrow\dotsb\llarrow
 \Hom_k(\C,\Lambda^n(W)^*)\llarrow\dotsb
\end{multline*}
is a noncontractible (since nonacyclic) complex in the abelian
category $\C\Contra$ with the terms belonging to the full subcategory
of projective objects $\C\Contra_\proj$, so it is a noncontractible
complex in $\C\Contra_\proj$.
 Applying the equivalence of additive categories
$\Phi_\C\:\C\Contra_\proj\simeq\C\Comodl_\inj$
\,\eqref{underived-co-contra-eqn}, we obtain a noncontractible
complex~\eqref{acyclic-comodule-complex} in $\C\Comodl_\inj$,
which is consequently also noncontractible in $\C\Comodl$.

 Part~(b): the complex~\eqref{acyclic-contramodule-complex} is acyclic
by Theorem~\ref{contramodule-complex-is-acyclic-theorem}.
 It remains to explain why the complex of
$\C$\+contramodules~\eqref{acyclic-contramodule-complex}
is not contractible.

 The truncated
coresolution~\eqref{infinite-symm-comodule-coresolution},
$$
 0\lrarrow\C\lrarrow\C\ot_k W
 \lrarrow \C\ot_k\Lambda^2(W) \lrarrow\dotsb\lrarrow
 \C\ot_k\Lambda^n(W)\lrarrow\dotsb
$$
is a noncontractible (since nonacyclic) complex in the abelian
category $\C\Comodl$ with the terms belonging to the full subcategory
of injective objects $\C\Comodl_\inj$, so it is a noncontractible
complex in $\C\Comodl_\inj$.
 Applying the equivalence of additive categories
$\Psi_\C\:\C\Comodl_\inj\simeq\C\Contra_\proj$
\,\eqref{underived-co-contra-eqn}, we obtain a noncontractible
complex~\eqref{acyclic-contramodule-complex} in $\C\Contra_\proj$,
which is consequently also noncontractible in $\C\Contra$.
\end{proof}

 Now let us present the two contramodule constructions of bounded
below, noncontractible, acyclic complexes of projective modules.
 Recall the notation $\Hom_\C({-},{-})$ for the Hom spaces in
the category $\C\Comodl$.
 The notation $\Hom^\C({-},{-})$ stands for the Hom spaces in
the category $\C\Contra$.

\begin{cor} \label{first-contramodule-construction-cor}
 Let $W$ be an infinite-dimensional vector space over a field~$k$ and\/
$\C=\Sym(W)$ be the symmetric coalgebra.
 Let
$$
 0\lrarrow k\lrarrow\J^0\lrarrow\J^1\lrarrow\J^2\lrarrow\dotsb
$$
be a notation for the injective
coresolution~\eqref{infinite-symm-comodule-coresolution}
of the trivial one-dimensional\/ $\C$\+comodule~$k$.
 Denote by\/ $\J$ the cofree\/ $\C$\+comodule\/ $\C\ot_kW$ cospanned
by~$W$.
 Let
\begin{equation} \label{acyclic-complex-of-projs-from-comodule-cores}
 0\lrarrow\Hom_\C(\J,\J^0)\lrarrow\Hom_\C(\J,\J^1)\lrarrow
 \Hom_\C(\J,\J^2)\lrarrow\dotsb
\end{equation}
be the complex obtained by applying the functor\/ $\Hom_\C(\J,{-})$ to
the truncated coresolution~\eqref{infinite-symm-comodule-coresolution}.
 Then~\eqref{acyclic-complex-of-projs-from-comodule-cores} is
a bounded below, noncontractible, acyclic complex of finitely generated
projective left modules over the ring $S=\Hom_\C(\J,\J)^\rop$.
\end{cor}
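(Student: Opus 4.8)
The plan is to follow the template of Corollary~\ref{injective-module-coresolution-construction} almost verbatim, replacing the module category $R\Modl$ by the comodule category $\C\Comodl$. First I would observe that the complex~\eqref{acyclic-complex-of-projs-from-comodule-cores} is acyclic: by construction $\Hom_\C(\J,{-})=\Psi_\C$ applied to a cofree comodule, so $\Hom_\C(\J,\J^n)=\Hom_\C(\C\ot_k W,\J^n)$, and the functor $\Psi_\C$ carries the truncated coresolution~\eqref{infinite-symm-comodule-coresolution} precisely to the complex~\eqref{acyclic-contramodule-complex} tensored with $W^*$ (or, more directly, applying $\Hom_\C(\C\ot_k W,{-})$ is isomorphic to $\Hom_k(W,\Psi_\C({-}))$). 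Since $\Psi_\C$ restricted to injective comodules is an equivalence onto projective contramodules, acyclicity of~\eqref{acyclic-complex-of-projs-from-comodule-cores} follows from Theorem~\ref{contramodule-complex-is-acyclic-theorem}; alternatively one can invoke Theorem~\ref{comodule-complex-is-acyclic-theorem} after a further application of the (exact on injectives) functor $\Phi_\C$. Either way the acyclicity is not the obstacle.

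Next I would check the projectivity and cyclicity of the terms. For every $n\ge0$ the comodule $\J^n$ appearing in~\eqref{infinite-symm-comodule-coresolution} is $\C\ot_k\Lambda^n(W)$, which is a cofree comodule, hence a direct summand of a cofree comodule of the form $\C\ot_k(\text{something})$; I want the stronger statement that $\J^n\in\add(\J)$ where $\J=\C\ot_k W$. This holds because $\Lambda^n(W)$ is a subquotient — in fact, since we are over a field, a direct summand — of $W^{\ot n}$, and $W$ is infinite-dimensional, so $W^{\ot n}$ embeds as a direct summand of $W$ itself (choose a bijection of a basis of $W$ with a basis of $W\sqcup W^{\ot n}$). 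Hence each $\J^n$ is a direct summand of a finite direct sum of copies of $\J$, i.e. $\J^n\in\add(\J)$. By Lemma~\ref{dress-lemma}(a) applied to $\sA=\C\Comodl$ (an idempotent-complete abelian category) and $M=\J$, the functor $\Hom_\C(\J,{-})$ is an equivalence $\add(\J)\simeq S\modl_\proj$ for $S=\Hom_\C(\J,\J)^\rop$; consequently each $\Hom_\C(\J,\J^n)$ is a finitely generated projective left $S$\+module, and since $\J^n$ is even a direct summand of $\J$ itself (one copy suffices, again by infinite-dimensionality of $W$), it is a \emph{cyclic} projective $S$\+module.

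Finally I would argue noncontractibility exactly as in the proof of Corollary~\ref{injective-module-coresolution-construction}: the truncated coresolution $0\to\J^0\to\J^1\to\dotsb$, viewed in $\C\Comodl$, is nonacyclic (its cohomology is the comodule $k$ in degree $0$), hence noncontractible; since all its terms lie in $\add(\J)$, and contractibility of a complex in a full subcategory is equivalent to contractibility in the ambient category, it is noncontractible as a complex in $\add(\J)$; transporting along the equivalence $\Hom_\C(\J,{-})\:\add(\J)\simeq S\modl_\proj$ yields that~\eqref{acyclic-complex-of-projs-from-comodule-cores} is noncontractible in $S\modl_\proj$, and therefore — again because full subcategories detect contractibility — noncontractible in $S\Modl$. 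The fully faithfulness of $\Hom_\C(\J,{-})$ on $\add(\J)$ is what makes this transport legitimate.

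I do not expect any genuine obstacle here; the one point requiring a little care is the claim $\J^n\in\add(\J)$, which relies on the combinatorial fact that for an infinite-dimensional vector space $W$ the tensor powers $W^{\ot n}$ (and hence their direct summands $\Lambda^n(W)$) are themselves direct summands — indeed isomorphic to direct summands with one copy — of $W$, because $\dim_k W^{\ot n}=\dim_k W$ for $n\ge1$ when $\dim_k W$ is infinite. Everything else is a mechanical transcription of the module-theoretic argument into the comodule setting, using that $\C\Comodl$ is an idempotent-complete (in fact Grothendieck) abelian category so that Lemma~\ref{dress-lemma}(a) applies.
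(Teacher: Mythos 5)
Your proposal is correct and follows essentially the same route as the paper: acyclicity via the identification $\Hom_\C(\C\ot_kW,{-})\cong\Hom_k(W,\Psi_\C({-}))$ together with Theorem~\ref{contramodule-complex-is-acyclic-theorem}, the observation that $\J^0$ is a direct summand of $\J$ while $\J^n\cong\J$ for $n\ge1$, and noncontractibility transported through the equivalence of Lemma~\ref{dress-lemma}(a). The only cosmetic difference is that the paper notes directly that $\Lambda^n(W)\cong W$ for $n\ge1$ (equal infinite dimensions) rather than routing through $W^{\ot n}$.
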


\begin{proof}
 For any $\C$\+comodule $\cM$, we have $\Hom_\C(\C\ot_kW,\>\cM)=
\Hom_k(W,\Hom_k(\C,\cM))\allowbreak=\Hom_k(W,\Psi_\C(\cM))$.
 Thus the complex~\eqref{acyclic-complex-of-projs-from-comodule-cores}
can be obtained by applying the vector space Hom functor
$\Hom_k(W,{-})$ to the complex~\eqref{acyclic-contramodule-complex},
and it follows from
Theorem~\ref{contramodule-complex-is-acyclic-theorem} that
the complex~\eqref{acyclic-complex-of-projs-from-comodule-cores}
is acyclic.

 Furthermore, by construction, the $\C$\+comodule $\J^0=\C$ is
a direct summand of $\J$, while the $\C$\+comodules
$\J^n=\C\ot_k\Lambda^n(W)$ are isomorphic to $\J$ for $n\ge1$.
 Hence the left $S$\+module $\Hom_\C(\J,\J^0)$ is a direct summand of
$\Hom_\C(\J,\J)=S$, and the left $S$\+modules $\Hom_\C(\J,\J^n)$
are isomorphic to $S$ for $n\ge1$.
 So~\eqref{acyclic-complex-of-projs-from-comodule-cores} is even
a complex of cyclic projective left $S$\+modules.

 The assertion that the complex of
$S$\+modules~\eqref{acyclic-complex-of-projs-from-comodule-cores} is
not contractible is provable similarly to the argument in
the proof of Corollary~\ref{injective-module-coresolution-construction}.
 By Lemma~\ref{dress-lemma}(a) (for $\sA=\C\Comodl$ and $M=\J$),
the functor $\Hom_\C(\J,{-})$ is an equivalence of categories
$\add(\J)\simeq S\modl_\proj$.
 The truncated coresolution~\eqref{infinite-symm-comodule-coresolution},
$$
 0\lrarrow\J^0\lrarrow\J^1\lrarrow\J^2\lrarrow\dotsb
$$
is a noncontractible (since nonacyclic) complex in $\C\Comodl$ with
the terms belonging to $\add(\J)$, so it is a noncontractible complex
in $\add(\J)$.
 Applying the equivalence of additive categories $\add(\J)\simeq
S\modl_\proj$, we obtain a noncontractible
complex~\eqref{acyclic-complex-of-projs-from-comodule-cores} in
$S\modl_\proj$, which is consequently also noncontractible in $S\Modl$.
 It is important for this argument that the functor
$\Hom_\C(\J,{-})\:\add(\J)\rarrow S\Modl$ is fully faithful.

 Alternatively, put $\fP=\Hom_k(\C,W)\in\C\Contra_\proj$.
 Then the co-contra correspondence~\eqref{underived-co-contra-eqn}
restricts to an equivalence of additive categories
$\add(\J)\simeq\add(\fP)$ taking $\J$ to~$\fP$.
 Hence the ring $S$ can be alternatively described as
$S=\Hom^\C(\fP,\fP)^\rop$.
 The complex of left
$S$\+modules~\eqref{acyclic-complex-of-projs-from-comodule-cores}
can be constructed by applying the functor $\Hom^\C(\fP,{-})$ to
the complex of $\C$\+contramodules~\eqref{acyclic-contramodule-complex},
whose terms belong to $\add(\fP)$.

 Then the noncontractibility argument can be based on
Corollary~\ref{acyclic-of-inj-comod-proj-contra-cor}(b) and
the fact that the functor $\Hom^\C(\fP,{-})$ is an equivalence
of categories $\add(\fP)\simeq S\modl_\proj$
(by Lemma~\ref{dress-lemma}(a) for $\sA=\C\Contra$ and $M=\fP$).
 Once again, it is important for this argument that the functor
$\Hom^\C(\fP,{-})\:\add(\fP)\rarrow S\Modl$ is fully faithful.
 In fact, the whole functor $\Hom^\C(\fP,{-})\:\C\Contra\rarrow
S\Modl$ is fully faithful (on the whole abelian category $\C\Contra$)
by~\cite[Theorem~6.10]{PS1}.
 The latter conclusion is based on the observations that $\fP$ is
the coproduct of $\dim W$ copies of the projective generator
$\C^*=\Hom_k(\C,k)$ of the abelian category $\C\Contra$, and
$\C^*$ is abstractly $\kappa$\+small in $\C\Contra$ for $\C=\Sym(W)$
if $\kappa$~is the successor cardinality of $\dim W$.
\end{proof}

\begin{cor} \label{second-contramodule-construction-cor}
 Let $W$ be an infinite-dimensional vector space over a field~$k$ and\/
$\C=\Sym(W)$ be the symmetric coalgebra.
 Let
$$
 0\llarrow k\llarrow\fP_0\llarrow\fP_1\llarrow\fP_2\llarrow\dotsb
$$
be a notation for the projective
resolution~\eqref{infinite-symm-contramodule-resolution}
of the trivial one-dimensional\/ $\C$\+con\-tra\-mod\-ule~$k$.
 Denote by\/ $\fP$ the free\/ $\C$\+contramodule\/ $\Hom_k(\C,W^*)$
spanned by the vector space $W^*=\Hom_k(W,k)$.
 Let
\begin{equation} \label{acyclic-complex-of-projs-from-contramodule-res}
 0\lrarrow\Hom^\C(\fP_0,\fP)\lrarrow\Hom^\C(\fP_1,\fP)\lrarrow
 \Hom^\C(\fP_2,\fP)\lrarrow\dotsb
\end{equation}
be the complex obtained by applying the contravariant
functor\/ $\Hom^\C({-},\fP)$ to the truncated
resolution~\eqref{infinite-symm-contramodule-resolution}.
 Then~\eqref{acyclic-complex-of-projs-from-contramodule-res} is
a bounded below, noncontractible, acyclic complex of finitely generated
projective left modules over the ring $S=\Hom^\C(\fP,\fP)$.
\end{cor}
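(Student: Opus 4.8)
The plan is to proceed exactly as in the proofs of
Corollaries~\ref{projective-module-resolution-construction}
and~\ref{first-contramodule-construction-cor}: I need to verify that
the complex~\eqref{acyclic-complex-of-projs-from-contramodule-res} is
acyclic, that its terms are finitely generated projective left
$S$\+modules, and that it is not contractible.

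Acyclicity I would establish by transporting the question to
the comodule side via the co-contra
correspondence~\eqref{underived-co-contra-eqn}.
 The free\/ $\C$\+contramodule $\fP=\Hom_k(\C,W^*)$ is taken by
the functor $\Phi_\C$ to the cofree\/ $\C$\+comodule $\C\ot_kW^*$;
more generally, $\Phi_\C$ takes the truncated projective resolution
$\fP_\bu=(0\llarrow\fP_0\llarrow\fP_1\llarrow\fP_2\llarrow\dotsb)$
of~\eqref{infinite-symm-contramodule-resolution} to the complex of
cofree comodules~\eqref{acyclic-comodule-complex} (this is, in fact,
precisely how the latter complex was constructed above).
 Since $\Phi_\C\:\C\Contra_\proj\simeq\C\Comodl_\inj$ is an equivalence
of additive categories and the contramodules $\fP_n$ and $\fP$ are all
projective, the natural isomorphisms $\Hom^\C(\fP_n,\fP)\cong
\Hom_\C(\Phi_\C(\fP_n),\C\ot_kW^*)$ assemble into an isomorphism
between the complex~\eqref{acyclic-complex-of-projs-from-contramodule-res},
which is $\Hom^\C(\fP_\bu,\fP)$, and the complex obtained by applying
the contravariant functor $\Hom_\C({-},\C\ot_kW^*)$ to
the complex~\eqref{acyclic-comodule-complex}.
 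The latter complex is acyclic by
Theorem~\ref{comodule-complex-is-acyclic-theorem}, while
$\Hom_\C({-},\C\ot_kW^*)$ is an exact functor because the
$\C$\+comodule $\C\ot_kW^*$ is injective (being cofree); hence
the complex~\eqref{acyclic-complex-of-projs-from-contramodule-res} is
acyclic.
 For the ``finitely generated projective'' claim I would note that,
$W$ being infinite-dimensional, one has $\Lambda^n(W)\cong W$ for all
$n\ge1$, so $\fP_n=\Hom_k(\C,\Lambda^n(W)^*)\cong\fP$ for $n\ge1$,
while $\fP_0=\Hom_k(\C,k)$ is a direct summand of $\fP$ because $k$ is
a direct summand of $W^*$.
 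Hence every left $S$\+module $\Hom^\C(\fP_n,\fP)$ is a direct summand
of $\Hom^\C(\fP,\fP)=S$ (and is isomorphic to $S$ for $n\ge1$), so
that~\eqref{acyclic-complex-of-projs-from-contramodule-res} is even
a complex of cyclic projective left $S$\+modules.

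Noncontractibility I would obtain by copying the argument from
the proof of Corollary~\ref{injective-module-coresolution-construction}.
 By Lemma~\ref{dress-lemma}(b), applied with $\sA=\C\Contra$ and
$M=\fP$, the contravariant functor $\Hom^\C({-},\fP)$ restricts to
an anti-equivalence of additive categories
$\add(\fP)^\sop\simeq S\modl_\proj$.
 The truncated resolution
$0\llarrow\fP_0\llarrow\fP_1\llarrow\fP_2\llarrow\dotsb$ is a complex
in $\C\Contra$ with terms in $\add(\fP)$, and it is noncontractible
because it is nonacyclic (its only nonzero cohomology being the
module~$k$, in cohomological degree~$0$); hence it is also
noncontractible as a complex in $\add(\fP)$.
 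Applying the anti-equivalence $\add(\fP)^\sop\simeq S\modl_\proj$, we
obtain a noncontractible
complex~\eqref{acyclic-complex-of-projs-from-contramodule-res} in
$S\modl_\proj$, which is consequently noncontractible in $S\Modl$ as
well.
 I do not expect a genuine obstacle anywhere; the one point that calls
for some care is the identification, in the acyclicity part, of
$\Hom^\C(\fP_\bu,\fP)$ with $\Hom_\C({-},\C\ot_kW^*)$ applied
to~\eqref{acyclic-comodule-complex} \emph{as complexes}, that is,
compatibly with the differentials; this is immediate from
the functoriality of $\Phi_\C$ and the naturality of the isomorphism
$\Hom^\C(\fP',\fP'')\cong\Hom_\C(\Phi_\C(\fP'),\Phi_\C(\fP''))$ for
projective\/ $\C$\+contramodules $\fP'$ and $\fP''$.
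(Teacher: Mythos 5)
Your proposal is correct and follows essentially the same route as the paper's proof; the only cosmetic difference is in the acyclicity step, where the paper uses the adjunction $\Hom^\C(\fQ,\Hom_k(\cN,V))\simeq\Hom_k(\cN\ocn_\C\fQ,V)$ to identify~\eqref{acyclic-complex-of-projs-from-contramodule-res} with $\Hom_k({-},W^*)$ applied to~\eqref{acyclic-comodule-complex} (exact since $k$~is a field), while you identify it with $\Hom_\C({-},\C\ot_kW^*)$ applied to the same complex (exact since the cofree comodule is injective) --- and these agree via the natural isomorphism $\Hom_\C(\cM,\C\ot_kV)\cong\Hom_k(\cM,V)$. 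Your analysis of the terms as direct summands of $S$ and your noncontractibility argument via Lemma~\ref{dress-lemma}(b) coincide with the paper's.
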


\begin{proof}
 For any right $\C$\+comodule $\cN$, any left $\C$\+contramodule $\fQ$,
and any $k$\+vector space $V$, there is a natural isomorphism of
$k$\+vector spaces
$$
 \Hom^\C(\fQ,\Hom_k(\cN,V))\simeq\Hom_k(\cN\ocn_\C\fQ,\>V)
$$
\cite[Section~3.1]{Prev}, \cite[Section~8.6]{Pksurv},
or~\cite[Sections~0.2.6 and~5.1.1]{Psemi}.
 In particular, we have natural isomorphisms
$$
 \Hom^\C(\fQ,\fP)=\Hom_k(\C\ocn_\C\fQ,\>W^*)=
 \Hom_k(\Phi_\C(\fQ),W^*).
$$
 Thus the complex~\eqref{acyclic-complex-of-projs-from-contramodule-res}
can be obtained by applying the contravariant vector space Hom functor
$\Hom_k({-},W^*)$ to the complex~\eqref{acyclic-comodule-complex}, and
it follows from Theorem~\ref{comodule-complex-is-acyclic-theorem} that
the complex~\eqref{acyclic-complex-of-projs-from-contramodule-res}
is acyclic.

 Furthermore, by construction, the $\C$\+comodule $\fP_0=\C^*$ is
is a direct summand of $\fP$, while the $\C$\+contramodules
$\fP_n=\Hom_k(\C,\Lambda^n(W)^*)$ are isomorphic to $\fP$ for $n\ge1$.
 Hence the left $S$\+module $\Hom^\C(\fP_0,\fP)$ is a direct summand
of $\Hom^\C(\fP,\fP)=S$, and the left $S$\+modules $\Hom^\C(\fP_n,\fP)$
are isomorphic to $S$ for $n\ge1$.
 So~\eqref{acyclic-complex-of-projs-from-contramodule-res} is even
a complex of cyclic projective left $S$\+modules.

 The assertion that the complex of
$S$\+modules~\eqref{acyclic-complex-of-projs-from-contramodule-res} is
not contractible is provable similarly to the argument in the proof
of Corollary~\ref{projective-module-resolution-construction}.
 By Lemma~\ref{dress-lemma}(b) (for $\sA=\C\Contra$ and $M=\fP$),
the functor $\Hom^\C({-},\fP)$ is an anti-equivalence of categories
$\add(\fP)^\sop\simeq S\modl_\proj$.
 The truncated resolution~\eqref{infinite-symm-contramodule-resolution},
$$
 0\llarrow\fP_0\llarrow\fP_1\llarrow\fP_2\llarrow\dotsb
$$
is a noncontractible (since nonacyclic) complex in $\C\Contra$ with
the terms belonging to $\add(\fP)$, so it is a noncontractible complex
in $\add(\fP$).
 Applying the anti-equivalence of additive categories
$\add(\fP)^\sop\simeq S\modl_\proj$, we obtain a noncontractible
complex~\eqref{acyclic-complex-of-projs-from-contramodule-res}, which
is consequently also noncontractible in $S\Modl$.
 It is important for this argument that the contravariant functor
$\Hom^\C({-},\fP)\:\add(\fP)^\sop\rarrow S\Modl$ is fully faithful.
\end{proof}

\Section{Summary of the Examples Obtained}

 Now we can summarize our constructions as follows.

\begin{conc} \label{main-conclusion}
 There exists an associative ring $S$ for which \par
\textup{(a)} there is a bounded above acyclic complex of injective
right $S$\+modules that is not contractible; \par
\textup{(b)} there is a bounded below acyclic complex of flat left
$S$\+modules that is not pure acyclic; \par
\textup{(c)} there is a bounded below acyclic complex of (finitely
generated) projective left $S$\+modules that is not contractible.
\end{conc}

\begin{proof}
 Proposition~\ref{projective-is-enough-to-refute} tells that any
ring $S$ satisfying~(c) also satisfies~(a) and~(b).
 Various examples of associative rings $S$ satisfying~(c) are provided
by Corollaries~\ref{injective-module-coresolution-construction},
\ref{projective-module-resolution-construction},
\ref{first-contramodule-construction-cor},
and~\ref{second-contramodule-construction-cor}.
\end{proof}

 What can one say about the rings $S$ appearing in
Corollaries~\ref{injective-module-coresolution-construction},
\ref{projective-module-resolution-construction},
\ref{first-contramodule-construction-cor},
and~\ref{second-contramodule-construction-cor}\,?
 First of all, \emph{none} of them is commutative
(while we have cocommutative coalgebra examples in
Corollary~\ref{acyclic-of-inj-comod-proj-contra-cor}).

 Let us denote the respective versions of the ring $S$ by
$S_{\ref{injective-module-coresolution-construction}}$,
$S_{\ref{projective-module-resolution-construction}}$,
$S_{\ref{first-contramodule-construction-cor}}$,
and $S_{\ref{second-contramodule-construction-cor}}$.
 While the ring $S_{\ref{injective-module-coresolution-construction}}$
(from Corollary~\ref{injective-module-coresolution-construction})
appears to be complicated and hard to visualize, the rings
$S_{\ref{projective-module-resolution-construction}}$,
$S_{\ref{first-contramodule-construction-cor}}$,
and $S_{\ref{second-contramodule-construction-cor}}$ can be described
rather explicitly.

 In the context of
Corollary~\ref{projective-module-resolution-construction}, it makes
sense to choose the infinite Koszul complex
$K_\bu(R)=\varinjlim_{B\subset A}K^B_\bu(R)$ to play the role of
the projective resolution $P_\bu$
\,\eqref{projective-module-resolution-of-k} of the $R$\+module~$k$.
 In this case, one can take $P$ to be the free $R$\+module with $A$
generators, $P=\bigoplus_{\alpha\in A}R$.
 Then the $R$\+module $P_0=R$ is a direct summand of $P$, while
the $R$\+module $P_n$ is isomorphic to $P$ for $n\ge1$, so
the assumption of the corollary is satisfied.
 The resulting ring
$S_{\ref{projective-module-resolution-construction}}=
\Hom_R(P,P)$ is the ring of infinite, column-finite $A\times A$
matrices with entries from the commutative polynomial ring
$R=k[x_\alpha:\alpha\in A]$ in infinitely many variables.

 In the context of
Corollaries~\ref{first-contramodule-construction-cor}
and~\ref{second-contramodule-construction-cor}, it makes sense to
introduce the notation $\J_{\ref{first-contramodule-construction-cor}}$
for the cofree comodule $\J=\C\ot_kW$ appearing in
Corollary~\ref{first-contramodule-construction-cor} and
the notation $\fP_{\ref{first-contramodule-construction-cor}}$ for
the free contramodule $\fP=\Hom_k(\C,W)$ mentioned in the discussion
in its proof.
 Then the notation $\fP_{\ref{second-contramodule-construction-cor}}$
can be used for the bigger free contramodule $\fP=\Hom_k(\C,W^*)$
from Corollary~\ref{second-contramodule-construction-cor}, and we
can also denote by $\J_{\ref{second-contramodule-construction-cor}}$
the corresponding cofree comodule $\J=\C\ot_kW^*$.

 The ring $S_{\ref{first-contramodule-construction-cor}}=
\Hom_\C(\J_{\ref{first-contramodule-construction-cor}},
\J_{\ref{first-contramodule-construction-cor}})^\rop=
\Hom^\C(\fP_{\ref{first-contramodule-construction-cor}},
\fP_{\ref{first-contramodule-construction-cor}})^\rop$ is the ring
of infinite, row-zero-convergent $A\times A$ matrices with entries
from the topological commutative formal power series ring
$\R=\C^*=\varprojlim_{B\subset A}k[[x_\alpha:\alpha\in B]]$ in
infinitely many variables.
 Such rings of row-zero-convergent matrices were discussed in
the papers~\cite[Example~7.10]{PS1} and~\cite[Section~5]{PS3}.

 Let $D$ denote the indexing set of a basis $\{y_\delta\:\delta\in D\}$
in the $k$\+vector space~$W^*$.
 The cardinality $|D|$ of the set $D$ is equal to $|k|^{|A|}$, where
$|k|$~is the cardinality of the field~$k$ and $|A|$ is the cardinality
of the set~$A$.
 Then the ring $S_{\ref{second-contramodule-construction-cor}}=
\Hom_\C(\J_{\ref{second-contramodule-construction-cor}},
\J_{\ref{second-contramodule-construction-cor}})=
\Hom^\C(\fP_{\ref{second-contramodule-construction-cor}},
\fP_{\ref{second-contramodule-construction-cor}})$ is the ring
of infinite, column-zero-convergent $D\times D$ matrices with entries
from the topological commutative formal power series ring $\R$
in infinitely many variables indexed by~$A$.

 Endowed with its natural topology, the ring
$S_{\ref{first-contramodule-construction-cor}}$ becomes a complete,
separated right linear topological ring (i.~e., a topological ring
with a base of neighborhoods of zero formed by open right ideals).
 Such topological rings were discussed in
the papers~\cite{Pproperf,PS1,PS3}.
 Moreover, the ring $S_{\ref{first-contramodule-construction-cor}}$
is \emph{topologically left perfect} in the sense
of~\cite[Section~14]{PS3} (as one can see from the discussion
in~\cite[Section~7.3]{PS1} or~\cite[Section~5]{PS3} together
with~\cite[Example~12.3]{Pproperf}).
 Similarly, the ring $S_{\ref{second-contramodule-construction-cor}}$
is a complete, separated, left linear topological ring which is,
moreover, topologically right perfect.
 In other words, the ring
$S_{\ref{second-contramodule-construction-cor}}$ is the endomorphism
ring of a module with perfect decomposition~\cite[Section~10]{PS3},
while $S_{\ref{first-contramodule-construction-cor}}$ is the opposite
ring to the endomorphism ring of such a module.

 Viewed as abstract rings, both the rings
$S_{\ref{first-contramodule-construction-cor}}$ and
$S_{\ref{second-contramodule-construction-cor}}$ are
\emph{semiregular} in the sense of~\cite[Section~4]{Ang};
see the discussion in~\cite[Remark~10.5]{PS3}.
 (The semiregularity is a left-right symmetric property.)
 The ring $S_{\ref{projective-module-resolution-construction}}$, on
the other hand, has vanishing Jacobson radical.

 The ring $S_{\ref{second-contramodule-construction-cor}}$ differs
from the opposite ring to
$S_{\ref{first-contramodule-construction-cor}}$ for the only reason
that the cardinality of the set $D$ is larger than that of
the set~$A$.
 One can employ the bigger cofree comodule
$\J_{\ref{second-contramodule-construction-cor}}=\C\ot_k W^*$
in lieu of the cofree comodule
$\J_{\ref{first-contramodule-construction-cor}}=\C\ot_k W$ in
the construction of
Corollary~\ref{first-contramodule-construction-cor} (while leaving
the rest of the construction unchanged).
 This will produce a pair of opposite rings $S$ and $S^\rop$, both
of them semiregular, both of them satisfying all the claims of
Conclusion~\ref{main-conclusion}.

\begin{rem}
 \emph{None} of the rings
$S_{\ref{projective-module-resolution-construction}}$,
$S_{\ref{first-contramodule-construction-cor}}$,
and $S_{\ref{second-contramodule-construction-cor}}$ is Noetherian
on either side.
 Indeed, consider the ring $T=\Hom_k(k^{(E)},k^{(E)})$ of infinite,
column-finite $E\times E$ matrices with entries from the field~$k$
(where $E$ is a set and $k^{(E)}$ is the $k$\+vector space with
a basis indexed by~$E$).
 Then $T$ is a quotient ring of
$S_{\ref{projective-module-resolution-construction}}$ (for $E=A$)
and of $S_{\ref{second-contramodule-construction-cor}}$ (for $E=D$),
while the opposite ring to $T$ is a quotient ring of
$S_{\ref{first-contramodule-construction-cor}}$ (for $E=A$).
 The ring $T$ is von Neumann regular, so it cannot be left or
right Noetherian (as any one-sided Noetherian von Neumann regular
ring is semisimple Artinian).
\end{rem}

\begin{rem}
 A simpler construction of rings and complexes of modules satisfying
Conclusion~\ref{main-conclusion} than the one discussed above exists
(see Example~\ref{universal-example} below).
 But the following na\"\i ve attempt at constructing an example
for Conclusion~\ref{main-conclusion}(b) fails.

 The commutative ring $R=k[x_\alpha:\alpha\in A]$ of polynomials
in infinitely many variables over a field~$k$ is not Noetherian,
but it is coherent.
 Hence the class of flat $R$\+modules is closed under infinite
products in $R\Modl$, and it follows that the $R$\+module
$\Hom_R(Q,P)$ is flat for any projective $R$\+module $Q$ and
flat $R$\+module~$P$.
 Thus the complex~\eqref{acyclic-complex-of-projs-from-proj-res}
from Corollary~\ref{projective-module-resolution-construction} is
a bounded below, acyclic complex of flat $R$\+modules.
 One does not even need the $R$\+modules $P_n$ to be direct summands
of $P$ for this claim to hold; it suffices to take $P=R$.

 However, this is \emph{not} an example for
Conclusion~\ref{main-conclusion}(b), because the complex of
$R$\+modules~\eqref{acyclic-complex-of-projs-from-proj-res} is
actually pure acyclic (for any flat $R$\+module~$P$).
 Indeed, it suffices to show that, for any finitely presented
$R$\+module $M$, applying the functor $M\ot_R{-}$ preserves acyclicity
of the complex~\eqref{acyclic-complex-of-projs-from-proj-res}.
 Denote the complex~\eqref{acyclic-complex-of-projs-from-proj-res}
by~$F^\bu$.

 Any finitely presented module over the ring of polynomials $R$ in
infinitely many variables has a finite projective resolution $G_\bu$
by finitely generated projective $R$\+modules.
 Since $F^\bu$ is a complex of flat $R$\+modules and $G_\bu$ is
a finite resolution, the complexes $M\ot_RF^\bu$ and $G_\bu\ot_RF^\bu$
are quasi-isomorphic.
 Finally, viewed as an object of the homotopy category of complexes
of $R$\+modules $\sK(R\Modl)$, the complex $G_\bu\ot_RF^\bu$ belongs
to the thick subcategory spanned by the complex~$F^\bu$ (since
the complex $G_\bu$ belongs to the thick subcategory spanned by
the one-term complex of $R$\+modules~$R$).
 As the complex $F^\bu$ is acyclic, so is the complex $G_\bu\ot_RF^\bu$.
\end{rem}

\begin{ex} \label{universal-example}
 The following example has a different nature than all the previous
examples in this paper.
 It was communicated to the author by A.~Canonaco and is reproduced here
with his kind permission.

 Suppose that we have a bounded below complex of \emph{free modules
with one generator} over a ring~$S$.
 Obviously, such a complex of (left) modules has the form
\begin{equation} \label{one-generator-free-modules-complex}
 0\lrarrow S\overset{*z_0}\lrarrow S\overset{*z_1}\lrarrow
 S\overset{*z_2}\lrarrow S\lrarrow\dotsb,
\end{equation}
where $z_0$, $z_1$, $z_2$,~\dots\ is some sequence of elements in~$S$.
 For the sequence of maps~\eqref{one-generator-free-modules-complex}
to be a complex, the equation $z_nz_{n+1}=0$ has to be satisfied in
$S$ for all integers $n\ge0$.

 Now let $k$~be a field and $S_{\mathrm{uni}}$ be the $k$\+algebra
generated by a sequence of elements $x_0$, $x_1$, $x_2$,~\dots\
with the imposed relations $x_nx_{n+1}=0$ for all $n\ge0$, and no
other relations.
 Then one can easily see that any element $s\in S_{\mathrm{uni}}$
satisfying the equation $sx_0=0$ vanishes, while any element
$s\in S_{\mathrm{uni}}$ satisfying $sx_{n+1}=0$ with $n\ge0$ has
the form $s=tx_n$ for some $t\in S_{\mathrm{uni}}$.
 It suffices to represent $s$~as a $k$\+linear combination of monomials
in the variables~$x_n$, \,$n\ge0$, etc.
 In other words, the bounded below complex of free
$S_{\mathrm{uni}}$\+modules
\begin{equation} \label{universal-example-eqn}
 0\lrarrow S_{\mathrm{uni}}\overset{*x_0}\lrarrow
 S_{\mathrm{uni}}\overset{*x_1}\lrarrow
 S_{\mathrm{uni}}\overset{*x_2}\lrarrow S_{\mathrm{uni}}\lrarrow\dotsb
\end{equation}
is acyclic.
 On the other hand, if~$k$ is endowed with the right
$S_{\mathrm{uni}}$\+module structure in which all the elements~$x_n$
act by zero in~$k$, then applying the functor
$k\ot_{S_{\mathrm{uni}}}{-}$ to
the complex~\eqref{universal-example-eqn} produces a nonacyclic complex
with zero differential.
 So the complex~\eqref{universal-example-eqn} is \emph{not}
contractible.

 The bounded below complex of free $S_{\mathrm{uni}}$\+modules with one
generator~\eqref{universal-example-eqn} is \emph{universal} in
the following sense.
 Let $S$ be an associative $k$\+algebra and $C^\bu$ be a complex of
free $S$\+modules with one generator such that $C^i=0$ for $i<0$.
 Then there exists a $k$\+algebra homomorphism $f\:S_{\mathrm{uni}}
\rarrow S$ such that the complex $C^\bu$ is obtained by applying
the functor of extension of scalars $S\ot_{S_{\mathrm{uni}}}{-}$ to
the complex~\eqref{universal-example-eqn}.
 Indeed, the complex $C^\bu$ has
the form~\eqref{one-generator-free-modules-complex} for some
elements $z_n\in S$, \,$n\ge0$ satisfying the equations $z_nz_{n+1}=0$,
and it remains to let $f\:S_{\mathrm{uni}}\rarrow S$ be
the homomorphism taking~$x_n$ to~$z_n$ for every $n\ge0$.
\end{ex}

 While the example in Example~\ref{universal-example} is certainly
simpler (to construct and prove its properties) than the examples
in Corollaries~\ref{injective-module-coresolution-construction},
\ref{projective-module-resolution-construction},
\ref{first-contramodule-construction-cor},
and~\ref{second-contramodule-construction-cor}, \emph{no} example of
a bounded below, noncontractible, acyclic complex of projective modules
(or of a bounded above, noncontractible, acyclic complex of injective
modules) can be \emph{too} simple.
 The results of~\cite[Appendix~A]{Sha} demonstrate this.

\bigskip

\end{document}